\newtheorem{theorem}{Theorem}[section]
\newtheorem{proposition}[theorem]{Proposition}
\newtheorem{corollary}[theorem]{Corollary}
\newtheorem{lemma}[theorem]{Lemma}
\theoremstyle{definition}
\newtheorem{example}[theorem]{Example}
\newtheorem{remark}[theorem]{Remark}
\newtheorem{question}[theorem]{Question}
\newtheorem{conv}[theorem]{Convention}
\newcommand{\qu}{/\kern-.7ex/}
\newcommand{\lqu}{\backslash \kern-.7ex \backslash}
\newcommand{\on}{\operatorname} 
\newcommand{\Aut}{\on{Aut}}
\title[Relative and Orbifold GW]{Higher genus relative and orbifold Gromov-Witten invariants}
\author{Hsian-Hua Tseng}
\address{Department of Mathematics\\ Ohio State University\\ 100 Math Tower, 231 West 18th Ave.\\Columbus\\ OH 43210\\ USA}
\email{hhtseng@math.ohio-state.edu}
\author{Fenglong You}
\address{Department of Mathematical and Statistical Sciences\\ 632 CAB \\ University of Alberta\\Edmonton\\ AB\\ T6G 2G1\\ Canada}
\email{fenglong@ualberta.ca}
\thanks{}
\keywords{}
\begin{document}
\date{\today}

\begin{abstract} 
Given a smooth projective variety $X$ and a smooth divisor $D\subset X$. We study relative Gromov-Witten invariants of $(X,D)$ and the corresponding orbifold Gromov-Witten invariants of the $r$-th root stack $X_{D,r}$. For sufficiently large $r$, we prove that orbifold Gromov-Witten invariants of $X_{D,r}$ are polynomials in $r$. Moreover, higher genus relative Gromov-Witten invariants of $(X,D)$ are exactly the constant terms of the corresponding higher genus orbifold Gromov-Witten invariants of $X_{D,r}$. We also provide a new proof for the equality between genus zero relative and orbifold Gromov-Witten invariants, originally proved by Abramovich-Cadman-Wise \cite{ACW}. When $r$ is sufficiently large and $X=C$ is a curve, we prove that stationary relative invariants of $C$ are equal to the stationary orbifold invariants in all genera.
\end{abstract}

\maketitle 

\tableofcontents
\section{Introduction}

Gromov-Witten theory associated to a smooth projective variety $X$ is an enumerative theory about counting curves in $X$ with prescribed conditions. Gromov-Witten invariants are defined as intersection numbers on the moduli space $\overline{M}_{g,n,d}(X)$ of n-pointed, genus $g$, degree $d\in H_2(X,\mathbb Z)$, stable maps to $X$. 

Given a smooth divisor $D$ in $X$, one can study the enumerative geometry of counting curves with prescribed tangency conditions along the divisor $D$. There are at least two ways to impose tangency conditions.

\subsection{Relative Gromov-Witten Invariants}
The first way to impose tangency conditions is to consider relative stable maps to $(X,D)$ developed in \cite{IP}, \cite{LR}, \cite{Li}. 

For a degree $d\in H_2(X,\mathbb Z)$, we consider a partition $\vec k=(k_1,\ldots,k_m)\in (\mathbb Z_{>0})^m$ of $\int_d[D]$. That is, 
\[
\sum_{i=1}^m k_i=\int_d[D].
\]
A {\em cohomology weighted partition} $\mathbf k$ of $\int_d[D]$ is a partition $\vec k$ whose parts are weighted by cohomology classes of $H^*(D,\mathbb Q)$. More precisely,
\[
\mathbf k=\{(k_1,\delta_{1}),\ldots, (k_m,\delta_{m})\},
\]
such that
\begin{itemize}
\item $\sum_{i=1}^m k_i=\int_d[D]$;
\item $\delta_{i}\in H^*(D,\mathbb Q), \quad 1\leq i \leq m$.
\end{itemize}
Cohomology weighted partitions will appear in the degeneration formula for Gromov-Witten invariants.

\begin{conv}
When $X$ is a curve and $D$ is a point, the cohomology weights are just the identity class of $H^*(pt,\mathbb Q)$. In this case, we will not distinguish $\mathbf k$ and $\vec k$. 
\end{conv}

We consider the moduli space $\overline{M}_{g,\vec k,n,d}(X,D)$ of $(m+n)$-pointed, genus $g$, degree $d\in H_2(X,\mathbb Z)$, relative stable maps to $(X,D)$ such that the relative conditions are given by the partition $\vec k$. We assume the first $m$ marked points are relative marked points and the last $n$ marked points are non-relative marked points. Let $\on{ev}_i$ be the $i$-th evaluation map, where
\begin{align*}
\on{ev}_i: \overline{M}_{g,\vec k,n,d}(X,D) \rightarrow D, & \quad\text{for } 1\leq i\leq m;\\
\on{ev}_i: \overline{M}_{g,\vec k,n,d}(X,D) \rightarrow X, & \quad \text{for } m+1\leq i\leq m+n.
\end{align*}

There is a stabilization map 
\[
s:\overline{M}_{g,\vec k, n,d}(X,D)\rightarrow \overline{M}_{g,m+n,d}(X).
\]
Write $\bar\psi_i= s^*\psi_i$ which is the class pullback from the corresponding descendant class on the moduli space $\overline{M}_{g,m+n,d}(X)$ of stable maps to $X$.
Consider
\begin{itemize}
\item $\delta_{i}\in H^*(D,\mathbb Q)$, for $1\leq i\leq m$.
\item $\gamma_{m+i}\in H^*(X,\mathbb Q)$, for $1\leq i\leq n$.
\item  $a_{i}\in \mathbb Z_{\geq 0}$, for $1\leq i\leq m+n$.
\end{itemize}
Relative Gromov-Witten invariants of $(X,D)$ are defined as
\begin{align}\label{relative-invariant-higher-dimension}
&\left\langle \prod_{i=1}^m\tau_{a_{i}}(\delta_{i})\left|\prod_{i=1}^n \tau_{a_{m+i}}(\gamma_{m+i})\right.\right\rangle^{(X,D)}_{g,\vec k,n,d}:=\\
\notag &\int_{[\overline{M}_{g,\vec k,n,d}(X,D)]^{vir}}\psi_1^{a_1}\on{ev}^*_{1}(\delta_{1})\cdots \psi_m^{a_m}\on{ev}^*_{m}(\delta_{m})\psi_{m+1}^{a_{m+1}}\on{ev}^{*}_{m+1}(\gamma_{m+1})\cdots\psi_{m+n}^{a_{m+n}}\on{ev}^{*}_{m+n}(\gamma_{m+n}).
\end{align}
We refer to \cite{IP}, \cite{LR}, \cite{Li} for more details about the construction of relative Gromov-Witten theory.

\subsection{Orbifold Gromov-Witten Invariants}

Another way to impose tangency conditions is to consider orbifold Gromov-Witten invariants of the $r$-th root stack $X_{D,r}$ of $X$ for a positive integer $r$ \cite{Cadman}. By \cite{GS}, root construction is essentially the only way to construct stack structures in codimension one. The construction of root stacks can be found in \cite[Appendix B]{AGV} and \cite{Cadman}. 

\begin{example}
For a positive integer $r$, the $r$-th root stack of $\mathbb P^1$ over the point $0\in \mathbb P^1$ is denoted by $\mathbb P^1[r]$. The root stack $\mathbb P^1[r]$ is the weighted projective line with a single stack point of order $r$ at $0$. We will be dealing with this stack when we study stationary Gromov-Witten theory of curves in Section \ref{section-stationary}.
\end{example}

The evaluation maps for orbifold Gromov-Witten invariants land on the inertia stack of the target orbifold. The coarse moduli space $\underline{I}X_{D,r}$ of the inertia stack of the root stack $X_{D,r}$ can be decomposed into disjoint union of $r$ components
\[
\underline{I}X_{D,r}=X\sqcup \coprod_{i=1}^{r-1} D,
\]
where there are $r-1$ components isomorphic to $D$. The component $X$ is called the identity component. Other components are called twisted sectors. 

The partition $\vec k$ can be used to impose orbifold data of orbifold stable maps as follows. We assume that $r>k_i$, for all $1\leq i\leq m$. For orbifold invariants of the root stack $X_{D,r}$, we consider the moduli space $\overline{M}_{g,\vec k,n,d}(X_{D,r})$ of $(m+n)$-pointed, genus g, degree $d$, orbifold stable maps to $X_{D,r}$ whose orbifold data is given by the partition $\vec k$, such that
\begin{itemize}
\item for $1\leq i\leq m$, the coarse evaluation map $\on{ev}_i$ at the $i$-th marked point lands on the twisted sector $D$ with age $k_i/r$. These marked points are orbifold marked points.
\item the coarse evaluation maps $\on{ev}_i$ at the last $n$ marked points all land on the identity component $X$ of the coarse moduli space of the inertia stack $IX_{D,r}$. These marked points are non-orbifold marked points.
\end{itemize}

Orbifold Gromov-Witten invariants of $X_{D,r}$ are defined as

\begin{align}\label{orbifold-invariant-higher-dimension}
&\left\langle \prod_{i=1}^m\tau_{a_{i}}(\delta_{i})\prod_{i=1}^n \tau_{a_{m+i}}(\gamma_{m+i})\right\rangle^{X_{D,r}}_{g,\vec k,n,d}:=\\
\notag &\int_{[\overline{M}_{g,\vec k,n,d}(X_{D,r})]^{vir}}\bar{\psi}_1^{a_1}\on{ev}^*_{1}(\delta_{1})\cdots \bar{\psi}_m^{a_m}\on{ev}^*_{m}(\delta_{m})\bar{\psi}_{m+1}^{a_{m+1}}\on{ev}^{*}_{m+1}(\gamma_{m+1})\cdots\bar{\psi}_{m+n}^{a_{m+n}}\on{ev}^{*}_{m+n}(\gamma_{m+n}),
\end{align}
where the descendant class $\bar{\psi}_i$ is the class pullback from the corresponding descendant class on the moduli space $\overline{M}_{g,m+n,d}(X)$ of stable maps to $X$.

The basic constructions and fundamental properties of orbifold Gromov-Witten theory can be found in \cite{Abramovich}, \cite{AGV02}, \cite{AGV}, \cite{CR} and \cite{Tseng}.

\subsection{Relations and Questions}

By \cite[Theorem 2]{MP}, relative Gromov-Witten invariants of a smooth pair $(X,D)$ can be uniquely and effectively reconstructed from the Gromov-Witten theory of $X$, the Gromov-Witten theory of $D$, and the restriction map $H^*(X,\mathbb Q)\rightarrow H^*(D,\mathbb Q)$.  On the other hand, for the smooth pair $(X,D)$, we conjectured\footnote{For smooth Deligne-Mumford stacks Y and a smooth divisor $D$, we proved the conjecture when $D$ is disjoint from the locus of stack structures of $X$ \cite{TY16}. The more general version of our conjecture is recently proved by \cite{CDW}.} and proved that the Gromov-Witten theory of root stack $X_{D,r}$ is also determined by the Gromov-Witten theory of $X$, the Gromov-Witten theory of $D$, and the restriction map $H^*(X,\mathbb Q)\rightarrow H^*(D,\mathbb Q)$ \cite{TY16}. This provides another evidence that these two theories may be related.

The relationship between relative and orbifold Gromov-Witten invariants in genus zero has been established by Abramovich-Cadman-Wise \cite{ACW} when the target is a smooth pair $(X,D)$. The relationship was first observed in \cite{CC} for genus zero maps to $X=\mathbb P^2$ with tangency conditions along a smooth plane cubic $D$. It was observed that,  for large and divisible $r$, orbifold Gromov-Witten invariants of the root stack $\mathbb P^2_{D,r}$ stabilize and coincide with relative Gromov-Witten invariants of $(\mathbb P^2,D)$. It was proved in \cite{ACW} that genus zero orbifold Gromov-Witten invariants of $X_{D,r}$ for large and divisible $r$ agree with genus zero relative Gromov-Witten invariants of $(X,D)$ for any $X$ and any $D$. The proof used comparison of virtual fundamental classes of different moduli spaces. 

The goal of this paper is to study the relationship between these relative and orbifold Gromov-Witten invariants in all genera.
In general the result of \cite{ACW} does not hold for higher genus invariants, as shown by a counterexample (due to D. Maulik) for genus one invariants in \cite[Section 1.7]{ACW}. Naturally, we ask the following questions.

\begin{question}\label{question-1}
What is the precise relationship between relative and orbifold Gromov-Witten invariants in higher genus?
\end{question}

\begin{question}\label{question-2}
Will the equality between higher genus relative and orbifold Gromov-Witten invariants hold under some assumptions?
\end{question}

In this paper, we answer the first question for invariants of smooth projective varieties and answer the second question for invariants of target curves.

\subsection{Higher Genus Invariants of General Targets}

For a smooth pair $(X,D)$, the orbifold invariants of $X_{D,r}$ in general depend on $r$. On the other hand, the relative invariants of $(X,D)$ do not depend on $r$. Hence, it is not expected that the exact equality between invariants of $X_{D,r}$ and $(X,D)$ holds in general. The precise relationship is the following:

\begin{theorem}\label{main-theorem}
Given a smooth projective variety $X$, a smooth divisor $D\subset X$, and a sufficiently large integer $r$, the orbifold Gromov-Witten invariant
\[
\left\langle \prod_{i=1}^m\tau_{a_{i}}(\delta_{i})\prod_{i=1}^n \tau_{a_{m+i}}(\gamma_{m+i})\right\rangle^{X_{D,r}}_{g,\vec k,n,d}
\]
of $X_{D,r}$ is a polynomial in $r$. Moreover, 
relative Gromov-Witten invariants of $(X,D)$ are the $r^0$-coefficients of orbifold Gromov-Witten invariants of $X_{D,r}$. More precisely,
\begin{align}
\left\langle \prod_{i=1}^m\tau_{a_{i}}(\delta_{i})\left|\prod_{i=1}^n \tau_{a_{m+i}}(\gamma_{m+i})\right.\right\rangle^{(X,D)}_{g,\vec k,n,d}=\left[\left\langle \prod_{i=1}^m\tau_{a_{i}}(\delta_{i})\prod_{i=1}^n \tau_{a_{m+i}}(\gamma_{m+i})\right\rangle^{X_{D,r}}_{g,\vec k,n,d}\right]_{r^0},
\end{align}
where the notation $[]_{r^0}$ stands for taking the coefficient of $r^0$-term of a polynomial in $r$.
\end{theorem}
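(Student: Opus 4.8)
The plan is to reduce the global statement to a local comparison via the degeneration formula, and then to execute a $\mathbb{C}^*$-virtual localization on the local model, matching terms order by order in $r$. Concretely, I would first degenerate $X$ to the normal cone of $D$, so that $X$ breaks into $X$ (with relative divisor $D$) glued to $Y = \mathbb{P}(\mathcal{O}_D \oplus N)$ along $D_0$, with $D_\infty$ the remaining relative divisor of $Y$. Applying the degeneration formula on both the relative side (to $(X,D)$) and the orbifold side (to $X_{D,r}$) expresses each invariant as a sum over splittings of the partition $k$ and distributions of markings, with matching gluing contributions governed by the diagonal class on products of $I(\mathcal{D}_r)$ or $D$. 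Since the degeneration formula has the same combinatorial shape on both sides and the pieces living over $X$ are \emph{identical} (the root construction is trivial away from $D$), the problem collapses to comparing the two local factors: relative invariants of $(Y, D_0 \cup D_\infty)$ against relative-orbifold invariants of $(Y_{D_0,r}, D_\infty)$.

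Next I would run $\mathbb{C}^*$-localization on both local models, using the fiberwise $\mathbb{C}^*$-action on $Y = \mathbb{P}(\mathcal{O}_D \oplus N)$ (scaling the fibers of $N$). The key structural fact is that, after expanding both relative geometries, the fixed loci are indexed by a rubber part mapping into a $\mathbb{P}^1$-bundle neighborhood of $D_0$ and a part mapping to the rubber target over $D$. Localization writes each invariant as a sum of products: vertex contributions over $D$ (or $\mathcal{D}_r$) and rubber integrals over $\overline{M}^{\sim}$ with base $D$. The crucial point is that on the orbifold side the only $r$-dependence enters through (i) the ages $k_i/r$ at orbifold markings, (ii) the orbifold $\psi$-classes $\bar\psi$ versus ordinary $\psi$, and (iii) the inverse Euler class of the normal bundle to the fixed locus, which acquires factors from the root stack structure along $D_0$. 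I would expand each of these as a Laurent series in $r$ and show that all negative powers cancel, leaving a genuine polynomial, and that the constant term reproduces exactly the relative rubber integrals appearing on the $(Y, D_0 \cup D_\infty)$ side.

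The engine making the constant-term identification work is the polynomiality of the relevant tautological cycle classes on $\overline{M}_{g,n,d}(D)$ established in \cite{JPPZ1} (the higher-dimensional analogue of Pixton's result in \cite[Appendix]{JPPZ}, as recalled in Section \ref{section-identity-cycle-classes}). Specifically, the double-ramification-type cycles that arise from the orbifold rubber integrals depend on $r$ through a weighted combination of divisor and $\psi$-classes, and polynomiality guarantees both that the orbifold invariant is a polynomial in $r$ for $r$ sufficiently large and that its $r^0$-coefficient is precisely the corresponding relative cycle. I would invoke this to rewrite each orbifold vertex-and-rubber term as a polynomial in $r$ and read off the constant term. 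Matching the constant terms contribution by contribution in the localization graph sum, and then reassembling via the degeneration formula, yields the claimed equality $(\ref{relative-invariant-higher-dimension}) = [(\ref{orbifold-invariant-higher-dimension})]_{r^0}$.

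The main obstacle I anticipate is the localization bookkeeping in the higher-dimensional setting: unlike the target-curve case of \cite{TY18}, here the fixed loci carry a positive-dimensional virtual geometry over $D$, so the vertex contributions are themselves Gromov-Witten integrals on $D$ rather than combinatorial weights, and I must track how the cohomology-weighted partition data $\delta_{s_i} \in H^*(D,\mathbb{Q})$ propagates through gluing. In particular, verifying that the $r$-dependent normal-bundle and $\bar\psi$-class contributions interact correctly with the $D$-valued cycle classes — so that polynomiality in $r$ survives integration over the moduli of maps to $D$ — is where the argument is most delicate, and it is exactly the step for which the result of \cite{JPPZ1} is indispensable.
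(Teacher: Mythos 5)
Your overall strategy is the same as the paper's: degenerate to the normal cone of $D$ on both sides, use \cite[Proposition 4.5.1]{AF} to identify the pieces over $X$, reduce to the local comparison between $(Y,D_0\cup D_\infty)$ and $(Y_{D_0,r},D_\infty)$, and then run $\mathbb{C}^*$-localization with the polynomiality of the pushed-forward classes $\hat{c}_i$ from \cite{JPPZ1} on $\overline{M}_{g,n,d}(D)$ as the engine (the paper's substitution $s=tr$ makes your ``expand in $r$ and extract the constant term'' precise). However, there is a genuine gap at the step where you propose ``matching the constant terms contribution by contribution in the localization graph sum.'' The localization analysis does not directly compare the two rigid theories for arbitrary insertions; what it actually produces is a comparison of \emph{rubber} integrals over $D$, and converting rubber integrals back into rigid invariants requires the rigidification identity of Maulik--Pandharipande (Lemma \ref{lemma-rigidification}), which forces a very specific insertion pattern: exactly one non-relative, non-orbifold marking $p$ carrying $\on{ev}_p^*([D_\infty])$, with all remaining insertions pulled back from $H^*(D,\mathbb{Q})$. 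It is precisely this marking that forces the target to degenerate at $D_\infty$, guarantees at least one factor of the $\infty$-type in every graph, and lets the $r^0$-extraction kill all graphs except the one with a single stable vertex over $D_\infty$ (Lemmas \ref{lemma-orbifold-rubber} and \ref{lemma-orbifold-invariant-rubber}, Proposition \ref{proposition-local-model}). For insertions involving $[D_0]\cdot\delta$, or several $[D_\infty]\cdot\delta$ factors, or none at all, no such graph-by-graph matching is available, and your proposal says nothing about how to handle them.

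Closing this gap is a substantial portion of the paper's proof (the Type I/II/III bootstrap in the proof of Theorem \ref{theorem-local-model}): Type I invariants, with all insertions pulled back from $D$, are shown to vanish on the relative side by taking the non-equivariant limit of the localization formula, and to have vanishing $t^0r^0$-coefficient on the orbifold side, with the case of no non-relative markings handled by the divisor equation applied to some $H\in H^2(Y,\mathbb{Q})$ with $\int_d H\neq 0$; Type II invariants, with several $[D_\infty]\cdot\delta$ insertions, require a \emph{second} application of the degeneration formula over $D_\infty$, which has the same shape on both sides and reduces the claim to Type I together with Proposition \ref{proposition-local-model}; and Type III insertions $[D_0]\cdot\delta$ are eliminated via the divisor relation $[D_\infty]=[D_0]-c_1(L)$. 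One smaller inaccuracy: you assert that ``all negative powers cancel,'' but in the paper negative powers of $r$ do not cancel --- they occur (in the vertex factors over $\mathcal{D}_r$) only for $i>g(v)$, in which case they are accompanied by negative powers of $t$ and are therefore excluded when extracting the $t^0$-coefficient; polynomiality in $r$ follows from this bookkeeping together with the polynomiality of $\hat{c}_i$, not from cancellation. Without the rigidification mechanism and the insertion-type reduction, your plan proves the theorem only for the special insertion pattern of Proposition \ref{proposition-local-model}, not for arbitrary $\gamma_{m+i}\in H^*(X,\mathbb{Q})$.
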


\begin{remark}
Theorem \ref{main-theorem} can also be formulated on the cycle level. This is because the techniques that we are using in this paper are the degeneration formula and the virtual localization formula. Both formulas are on the level of virtual cycles. The virtual class version of Theorem \ref{main-theorem} can be proved by straightforward adaptations of the arguments in this paper. In particular, a virtual class version of Theorem \ref{main-theorem} is stated in \cite{FWY18} for genus zero invariants and will appear in \cite{FWY19} for higher genus  invariants. Note that, the results in \cite{FWY18} and \cite{FWY19} extend the result of this paper to include relative invariants with negative contact orders.
\end{remark}

Theorem \ref{main-theorem} directly implies the following result.
\begin{corollary}
The relative Gromov-Witten invariants of $(X,D)$ are completely determined by the orbifold Gromov-Witten invariants of the root stacks $X_{D,r}$ for all sufficiently large $r$.
\end{corollary}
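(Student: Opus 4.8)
The plan is to obtain this as an immediate consequence of Theorem \ref{main-theorem}, so the work lies only in making precise the sense in which ``completely determined'' is meant. First I would fix all of the discrete data: the genus $g$, the degree $d\in H_2(X,\mathbb{Q})$, the cohomological weighted partition $k$, the cohomology insertions $\gamma_{m+i}$, and the descendent orders $a_{m+i}$. By Theorem \ref{main-theorem} there is an integer $r_0$, depending only on this data, such that for every $r\geq r_0$ the orbifold invariant
\[
P(r):=\left\langle \prod_{i=1}^n \tau_{a_{m+i}}(\gamma_{m+i})\right\rangle^{X_{D,r}}_{g,k,n,d}
\]
agrees with the values of a single polynomial in $r$, whose constant term $[P(r)]_{r^0}$ equals the associated relative invariant.

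Next I would observe that, being a polynomial, $P$ has some degree $d_0$ that is bounded in terms of the fixed discrete data; this bound is implicit in the localization and degeneration analysis that establishes the polynomiality half of Theorem \ref{main-theorem}. A polynomial of degree at most $d_0$ is uniquely determined by its values at any $d_0+1$ distinct points. Hence the orbifold invariants of $X_{D,r}$ evaluated at, say, $r=r_0,\,r_0+1,\,\dots,\,r_0+d_0$ determine $P$ completely by Lagrange interpolation.

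Finally, reading off the constant term $[P(r)]_{r^0}$ of the reconstructed polynomial yields, by Theorem \ref{main-theorem}, the relative invariant
\[
\left\langle k\left|\prod_{i=1}^n \tau_{a_{m+i}}(\gamma_{m+i})\right.\right\rangle^{(X,D)}_{g,k,n,d}.
\]
Since every relative Gromov-Witten invariant of $(X,D)$ arises in exactly this manner, the entire relative theory is recovered from the orbifold theory of $X_{D,r}$ for $r$ sufficiently large. There is no genuine obstacle beyond the uniform degree bound on $P$, which is supplied directly by Theorem \ref{main-theorem}; the remainder is the standard fact that a polynomial is recoverable from finitely many of its values.
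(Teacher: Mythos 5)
Your argument is correct and takes essentially the same route as the paper, which states the corollary as a direct consequence of Theorem \ref{main-theorem} with no further proof; spelling out the implication via the polynomiality statement and recovery of the constant term is exactly the intended reasoning. One simplification worth noting: the uniform degree bound $d_0$ you invoke (and which the paper never explicitly establishes) is unnecessary, since the orbifold invariants are available for \emph{all} $r$ sufficiently large, and a polynomial agreeing with given values at infinitely many points is already uniquely determined, so its constant term --- and hence the relative invariant --- is recovered without any interpolation at finitely many points.
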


\begin{example}
In genus zero, relative invariants of $(X,D)$ are equal to orbifold invariants of $X_{D,r}$, for $r$ sufficiently large \cite{ACW}. There is a counterexample in genus one given by D. Maulik in \cite[Section 1.7]{ACW}. It is worth to point out that Maulik's counterexample does fit into our result. The example is as follows. Let $X=E\times \mathbb P^1$, where $E$ is an elliptic curve. Consider the divisor $D=X_0\cup X_\infty$, the union of $0$ and $\infty$ fibers of $X$ over $\mathbb P^1$. One can consider the root stack $X_{D, r,s}$ obtained from taking $r$-th root along $X_0$ and $s$-th root along $X_\infty$. One can compare relative invariants of $(X,D)$ and orbifold invariants of the root stack $X_{D, r,s}$. Taking a fiber class $f\in H_2(X)$ of the fibration $X\rightarrow \mathbb P^1$, the genus one relative and orbifold invariants with no insertions are computed in \cite[Section 1.7]{ACW}:
\begin{align*}
&\langle \rangle_{1,f}^{(X,D)}=0;\\
&\langle \rangle_{1,f}^{X_{D, r,s}}=r+s.
\end{align*}
Hence, we have
\begin{align*}
\langle \rangle_{1,f}^{(X,D)}=\left[\langle \rangle_{1,f}^{X_{D, r,s}}\right]_{r^0s^0}.
\end{align*}
\end{example}

The proof of Theorem \ref{main-theorem} follows from degeneration formula and virtual localization computation.

By degeneration formula, we can reduce Theorem \ref{main-theorem} to the comparison between the following invariants of (relative) local models. We can consider the degeneration of $X$ (resp. $X_{D,r}$) to the normal cone of $D$ (resp. $\mathcal D_r$). Indeed, let $Y:=\mathbb P(\mathcal O_D\oplus N)$ where $N$ is the normal bundle of $D\subset X$, we will consider relative invariants of $(Y,D_0\cup D_\infty)$, where $D_0$ and $D_\infty$ are zero and infinity sections respectively. On the other hand, we will consider orbifold-relative invariants of $(Y_{D_0,r},D_\infty)$, where $Y_{D_0,r}$ is the $r$-th root stack of the zero section $D_0$ of $Y$. Theorem \ref{main-theorem} reduces to the comparison between relative invariants of $(Y,D_0\cup D_\infty)$ and orbifold-relative invariants of $(Y_{D_0,r},D_\infty)$.

The relationship between invariants of $(Y,D_0\cup D_\infty)$ and of $(Y_{D_0},D_\infty)$ can be found by $\mathbb C^*$-virtual localization. Localization computation relates both relative invariants of $(Y,D_0\cup D_\infty)$ and orbifold-relative invariants of $(Y_{D_0,r},D_\infty)$ to rubber integrals with the base variety $D$.

A key point for the localization computation is the polynomiality of certain cohomology classes on the moduli space $\overline{M}_{g,n,d}(D)$ of stable maps to $D$ which is proved in \cite[Corollary 11]{JPPZ18}, see Section \ref{section-identity-cycle-classes}. For the relationship between relative and orbifold Gromov-Witten theory of curves, the corresponding result is the polynomiality of certain tautological classes on the moduli space $\overline{M}_{g,n}$ of stable curves proved in \cite[Proposition 5]{JPPZ}.  

We can use the localization computation in the proof of Theorem \ref{main-theorem}, without the need of polynomiality, to provide a new proof of the main theorem of \cite{ACW} in Section \ref{section-genus-zero}. The different behavior between genus zero invariants and higher genus invariants can be seen directly from the difference of their localization computations. 

We restrict our discussions to the case when $X$ is a smooth projective variety, but Theorem \ref{main-theorem} can be extended to the case when $X$ is an orbifold. The key ingredient is the generalization of the polynomiality in \cite{JPPZ18} to orbifolds. When $X$ is a one dimensional orbifold, we only need the orbifold version of  the polynomiality  in \cite{JPPZ}, which has been proved in our previous work \cite{TY16a} on double ramification cycles on the moduli spaces of admissible covers.

\subsection{Stationary Invariants of Target Curves}

We answer Question \ref{question-2} for stationary Gromov-Witten invariants of target curves. 

Gromov-Witten theory of target curves has been completely determined in the trilogy \cite{OP06a}, \cite{OP06b} and \cite{OP} by Okounkov-Pandharipande. Gromov-Witten theory of a target curve $C$ is closely related to Hurwitz theory of enumerations of ramified covers of $C$. The GW/H correspondence proved in \cite{OP06a} showed a correspondence between stationary Gromov-Witten invariants of $C$ and Hurwitz numbers of $C$. The main result of \cite{OP06b} showed that equivariant Gromov-Witten theory of $\mathbb P^1$ is governed by the $2$-Toda hierarchy. The Virasoro constraints for target curves were proven in \cite{OP}, the third part of the trilogy.

Moreover, Gromov-Witten theory of $\mathbb P^1$ can be considered as a more fundamental object than Gromov-Witten theory of a point \cite{OP06b}. The stationary Gromov-Witten invariants of $\mathbb P^1$ arise as Eynard-Orantin invariants \cite{NS}, \cite{BOSS}. As an application, Gromov-Witten theory of a point arises in the asymptotics of large degree Gromov-Witten invariants of $\mathbb P^1$ \cite{NS}, \cite{OP09}.

Now we consider stationary invariants of curves. Let $X=C$ be a smooth projective curve and $q$ be a point in $C$, we consider the following stationary relative invariants of $(C,q)$:
\begin{align}\label{relative-invariant-target-curve}
\langle \prod_{i=1}^n\tau_{a_{m+i}}(\omega)| \vec k\rangle_{g,n,\vec k,d}^{(C,q)}
:=
 \int_{[\overline{M}_{g,n,\vec k,d}(C,q)]^{\on{vir}}}\prod_{i=1}^n\psi_{m+i}^{a_{m+i}}\on{ev}^{*}_{m+i}\omega,
\end{align}
where $\omega \in H^2(C,\mathbb Q)$ denote the class that is Poincar\'e dual to a point.

We consider the root stack $C[r]$ of $C$ by taking $r$-th root along $q$. The stationary orbifold invariants of $C[r]$ are defined as
\begin{align}\label{orbifold-invariant-target-curve}
\langle \prod_{i=1}^n\tau_{a_i}(\omega)\rangle_{g,n,\vec k,d}^{C[r]}
:=
 \int_{[\overline{M}_{g,n,\vec k,d}(C[r])]^{\on{vir}}}\prod_{i=1}^{m}\on{ev}^*_{i}({\mathbf 1}_{k_i/r})\prod_{i=1}^n\bar{\psi}_{m+i}^{a_{m+i}}\on{ev}^{*}_{m+i}\omega,
\end{align}
where ${\mathbf 1}_{k_i/r}$ is the identity class in twisted sector of age $k_i/r$.

\begin{theorem}\label{theorem-stationary}
Let $C$ be a smooth target curve in any genus. When $r$ is sufficiently large, the stationary Gromov-Witten invariants of $(C,q)$ are equal to the stationary Gromov-Witten invariants of the root stack $C[r]$. That is,
\[
(\ref{relative-invariant-target-curve})=(\ref{orbifold-invariant-target-curve}).
\]
\end{theorem}

\begin{remark}
Theorem \ref{theorem-stationary} can be extended slightly by string equations and dilaton equations for Gromov-Witten theory of $(C,q)$ and $C[r]$ with insertions $\tau_0({\mathbf 1})$ and $\tau_1({\mathbf 1})$.
\end{remark}

The proof is based on the degeneration of the target and the equality in genus zero. 

As an application for the equality between stationary invariants. We obtain the GW/H correspondence for orbifold Gromov-Witten invariants of the root stack $C[r_1,\ldots,r_l]$ obtained by taking sufficiently large $r_i$-th root at the point $q_i\in C$ for $1\leq i\leq l$.

\subsection{Further Discussions}

The exact equality between stationary relative invariants of curves and stationary orbifold invariants of curves is in fact a unique feature for Gromov-Witten theory of curves.
The higher dimensional analogy of the equality between stationary invariants of curves is not correct \footnote{In this context, based on the degeneration and localization analysis, a reasonable analogy of stationary invariants for higher dimensional target is to require the restrictions of all cohomological insertions to $D$ vanish.}. It can already be seen from the counterexample given by Maulik in \cite[Section 1.7]{ACW}. The counterexample is about invariants of $X:=E\times \mathbb P^1$, where $E$ is an elliptic curve, with no insertions. These invariants can be viewed as stationary invariants without any insertions. Moreover, the proof for the equality of stationary invariants of curves in Section \ref{section-proof-stationary-1} used degeneration formula to reduce the equality to the case of invariants with no insertions. For Gromov-Witten theory of curves, the equality reduces to the trivial case. It does not reduce to the trivial case beyond Gromov-Witten theory of curves. Indeed, Maulik's counterexample shows that the equality is not true in general. In \cite[Section 1.7]{ACW}, this counterexample is interpreted as a result of the nontriviality of the Picard group of the elliptic curve $E$. 

\subsection{Plan of the Paper}

The paper is organized as follows.

In Section \ref{section-degeneration}, we reduce the comparison between relative and orbifold invariants to (relative) local models by applying degeneration formulas to relative and orbifold invariants. In Section \ref{section-p1-bundle}, we prove Theorem \ref{main-theorem} for local models by virtual localization. Our localization computation is also used in Section \ref{section-genus-zero} to provide a new proof for the equality between genus zero relative and orbifold invariants. In Section \ref{section-stationary}, we present the proof of Theorem \ref{theorem-stationary}. As an easy consequence of Theorem \ref{theorem-stationary}, we extend the GW/H correspondence to stationary orbifold invariants of curves when the root constructions on the curve are taken to be sufficiently large.   

\section*{Acknowledgment}
We would like to thank F. Janda, R. Pandharipande, A. Pixton and D. Zvonkine for sharing the draft of their paper \cite{JPPZ18} with us. We also want to thank Longting Wu for pointing out how to include orbifold/relative descendant classes in Theorem \ref{main-theorem}. F. Y. would like to thank Vincent Bouchard, Qile Chen, Honglu Fan and Zhengyu Zong for helpful discussions. We would also like to thank the anonymous referees for their extremely useful comments and corrections. H.-H. T. is supported in part by NSF grant DMS-1506551. F. Y. is supported by the postdoctoral fellowship of NSERC and Department of Mathematical Sciences at the University of Alberta.

\section{Degeneration}\label{section-degeneration}

In this section, we show that Theorem \ref{main-theorem} and Theorem \ref{theorem-stationary} can be reduced to the case of $\mathbb P^1$-bundles by the degeneration formula. It can be understood by observing that the comparison between relative and orbifold invariants is "local over the divisor $D$", hence it is sufficient to compare invariants of local models. The degeneration formula  gives the precise statement for this observation.

Following \cite{TY16}, we consider the degeneration of $X_{D,r}$ to the normal cone of $\mathcal D_r$, the divisor of $X_{D,r}$ lying over $D\subset X$. The degeneration formula in \cite[Theorem 0.4.1]{AF} shows that orbifold Gromov-Witten invariants of $X_{D,r}$ are expressed in terms of relative Gromov-Witten invariants of $(X_{D,r},\mathcal D_r)$ and of $(\mathcal Y, \mathcal D_\infty)$, where $\mathcal Y:=\mathbb P(\mathcal O \oplus\mathcal N )$ is obtained from the normal bundle $\mathcal N$ of $\mathcal D_r\subset X_{D,r}$; the infinity section $\mathcal D_\infty$ of $\mathcal Y\rightarrow \mathcal D_r$ is identified with $\mathcal D_r\subset X_{D,r}$ under the gluing.

By \cite[Proposition 4.5.1]{AF}, relative Gromov-Witten invariants of $(X_{D,r},\mathcal D_r)$ are equal to relative Gromov-Witten invariants of $(X,D)$ and relative Gromov-Witten invariants of $(\mathcal Y,\mathcal D_\infty)$ are equal to relative Gromov-Witten invariants of $(Y_{D_0,r},D_\infty)$, where $Y:=\mathbb P(\mathcal O\oplus N)$ is obtained from the normal bundle $N$ of $D\subset X$ and $Y_{D_0,r}$ is the root stack of $Y$ constructed by taking $r$-th root along the zero section $D_0$ of $Y\rightarrow D$.

Then, the degeneration formula for the orbifold Gromov-Witten invariants of $X_{D,r}$ is indeed written as
\begin{align}\label{degeneration-orbifold}
&\left\langle \prod_{i=1}^m\tau_{a_{i}}(\delta_{i})\prod_{i=1}^n \tau_{a_{m+i}}(\gamma_{m+i})\right\rangle^{X_{D,r}}_{g,\vec k,n,d}=\\
\notag&\sum  \frac{\prod_i \eta_i}{|\on{Aut}(\mathbf \eta)|} \left\langle\left.\prod_{i=1}^m\tau_{a_{i}}(\delta_{i})\prod_{i\in S}\tau_{a_{m+i}}(\gamma_{m+i})\right |\mathbf \eta\right\rangle^{\bullet,(Y_{D_0,r},D_\infty)}_{g_1,\vec k,|S|,\vec \eta,d_1}\left\langle \mathbf \eta^\vee\left|\prod_{i\not\in S}\tau_{a_{m+i}}(\gamma_{m+i})\right.\right\rangle^{\bullet,(X,D)}_{g_2,\vec \eta,n-|S|,d_2}, 
\end{align}
where $\mathbf \eta^\vee$ is defined by taking the Poincar\'e duals of the cohomology weights of the cohomology weighted partition $\mathbf \eta$; $|\Aut(\mathbf \eta)|$ is the order of the automorphism group $\Aut(\mathbf \eta)$ preserving equal parts of the cohomology weighted partition $\mathbf \eta$. The sum is over all splittings of $g$ and $d$, all choices of $S\subset \{ 1,\ldots,n \}$, and all intermediate cohomology weighted partitions $\mathbf \eta$. The superscript $\bullet$ stands for possibly disconnected Gromov-Witten invariants.

\begin{remark}
The degeneration of $X_{D,r}$ can also be constructed as follows. One can first consider the degeneration of $X$ to the normal cone of $D$. The total space of the degeneration admits a divisor $B$ whose restriction to the general fiber is $D$ and restriction to the special fiber is $D_0$, the zero section of $Y=\mathbb P(\mathcal O_D\oplus N)$. Taking the $r$-th root stack along $B$, we have a flat degeneration of $X_{D,r}$ to $X$ glued together with $Y_{D_0,r}$ along the infinity section $D_\infty\subset Y_{D_0,r}$. It yields the same degeneration formula as in (\ref{degeneration-orbifold}).
\end{remark}

For relative Gromov-Witten invariants of $(X,D)$, we consider the degeneration of $X$ to the normal cone of $D$.  It yields the following degeneration formula of \cite{Li}:
\begin{align}\label{degeneration-relative}
&\left\langle \prod_{i=1}^m\tau_{a_{i}}(\delta_{i})\left| \prod_{i=1}^n \tau_{a_{m+i}}(\gamma_{m+i})\right.\right\rangle^{(X,D)}_{g,\vec k,n,d}=\\
\notag&\sum \frac{\prod_i \eta_i}{|\on{Aut}(\mathbf \eta)|} \left\langle \prod_{i=1}^m\tau_{a_{i}}(\delta_{i})\left|\prod_{i\in S}\tau_{a_{m+i}}(\gamma_{m+i})\right|\mathbf \eta\right\rangle^{\bullet,(Y,D_0\cup D_\infty)}_{g_1,\vec k,|S|,\vec \eta,d_1}\left\langle\mathbf \eta^\vee\left|\prod_{i\not\in S}\tau_{a_{m+i}}(\gamma_{m+i})\right.\right\rangle^{\bullet,(X,D)}_{g_2,\vec \eta,n-|S|,d_2}. 
\end{align}
The sum is also over all intermediate cohomology weighted partitions $\mathbf \eta$ and all splitting of $g$, $d$ and $n$. 

The degeneration formulae (\ref{degeneration-orbifold}) and (\ref{degeneration-relative}) take the same form. Hence, the comparison between orbifold invariants of $X_{D_,r}$ and relative invariants of $(X,D)$ reduces to the comparison between invariants of $(Y_{D_0,r},D_\infty)$ and invariants of $(Y,D_0\cup D_\infty)$. More precisely, it is sufficient to compare the relative invariant
\begin{align}\label{invariant-relative-local-model}
\left\langle \prod_{i=1}^m\tau_{a_{i}}(\delta_{i})\left|\prod_{i=1}^n\tau_{a_{m+i}}(\gamma_{m+i})\right|\mathbf \mu\right\rangle^{(Y,D_0\cup D_\infty)}_{g,\vec k,n,\vec\mu,d}
\end{align} 
of $(Y,D_0\cup D_\infty)$ and the orbifold-relative invariant
\begin{align}\label{invariant-orbifold-local-model}
\left\langle\left. \prod_{i=1}^m\tau_{a_{i}}(\delta_{i})\prod_{i=1}^n\tau_{a_{m+i}}(\gamma_{m+i})\right|\mathbf \mu\right\rangle^{(Y_{D_0,r},D_\infty)}_{g,\vec k,n,\vec\mu,d}
\end{align}
of $(Y_{D_0,r},D_\infty)$, where $\mathbf \mu$ is a cohomology weighted partition of $\int_d[D_\infty]$.

\begin{remark}
By the degeneration formula, we should compare disconnected invariants instead of connected invariants. However, the relationship between disconnected invariants follows from the relationship between connected invariants. Hence, it is sufficient to compare connected invariants. 
\end{remark}

As a result, the comparison can be considered as \emph{local} over the relative/orbifold divisor $D$. The pairs  $(Y_{D_0,r},D_\infty)$ and $(Y,D_0\cup D_\infty)$ can be viewed as (relative) local models of $X_{D_,r}$ and $(X,D)$ . Therefore, Theorem \ref{main-theorem} follows from the following theorem for local models.
\begin{theorem}\label{theorem-local-model}
For $r$ sufficiently large, the orbifold-relative invariant
\[
\left\langle\left.\prod_{i=1}^m\tau_{a_{i}}(\delta_{i})\prod_{i=1}^n\tau_{a_{m+i}}(\gamma_{m+i})\right|\mathbf \mu\right\rangle^{(Y_{D_0,r},D_\infty)}_{g,\vec k,n,\vec \mu,d}
\]
is a polynomial in $r$ and, 
\begin{align}
 \left[\left\langle\left.\prod_{i=1}^m\tau_{a_{i}}(\delta_{i})\prod_{i=1}^n\tau_{a_{m+i}}(\gamma_{m+i})\right|\mathbf \mu\right\rangle^{(Y_{D_0,r},D_\infty)}_{g,\vec k,n,\vec \mu,d}\right]_{r^0}=
 \left\langle \prod_{i=1}^m\tau_{a_{i}}(\delta_{i})\left|\prod_{i=1}^n\tau_{a_{m+i}}(\gamma_{m+i})\right|\mathbf \mu\right\rangle^{(Y,D_0\cup D_\infty)}_{g,\vec k,n,\vec\mu,d}.
\end{align}
\end{theorem}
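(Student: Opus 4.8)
The plan is to compare the two invariants in Theorem \ref{theorem-local-model} by running $\mathbb{C}^*$-virtual localization on both the relative invariant of $(Y, D_0 \cup D_\infty)$ and the relative-orbifold invariant of $(Y_{D_0,r}, D_\infty)$, using the fiberwise scaling action on the $\mathbb{P}^1$-bundle $Y = \mathbb{P}(\mathcal{O}_D \oplus N)$. The key structural point is that $Y$ (and its root stack) carries a natural $\mathbb{C}^*$-action with fixed loci supported over the zero section $D_0$ and the infinity section $D_\infty$, together with the base $D$. First I would set up the localization so that both computations are expressed in terms of the same \emph{rubber integrals} over $D$: the contributions organize themselves by how the stable map distributes between a ``vertex over $D_0$'' (where the root-stack structure or the relative divisor $D_0$ lives) and a rubber part mapping to the target degenerating over $D$. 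Crucially, the non-orbifold insertions $\tau_{a_{m+i}}(\gamma_{m+i})$ and the relative conditions at $D_\infty$ given by $\mu$ behave identically in both computations, so the only discrepancy can come from the fixed-point contributions localized near $D_0$ versus $\mathcal{D}_r$.

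The heart of the argument is then to isolate the $r$-dependence. On the orbifold side, the localization contributions from the twisted sectors over $\mathcal{D}_r$ carry factors of $r$ coming from (i) the orders of automorphisms/gerbe structures, (ii) the ages $k_i/r$ entering the $\bar\psi$-classes and the inverted Euler class of the virtual normal bundle, and (iii) the Bernoulli-type contributions of the $\mathbb{C}^*$-moving parts at the orbifold fixed locus. I would argue that after clearing denominators, each fixed-point contribution is a Laurent expression whose dependence on $r$ is \emph{polynomial}, which is precisely where the promised polynomiality of the relevant tautological cycle classes on $\overline{M}_{g,n,d}(D)$ enters (the result attributed to \cite{JPPZ1} and invoked in Section \ref{section-identity-cycle-classes}). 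This polynomiality guarantees that the integrals appearing in the localization sum are themselves polynomial in $r$, so that the full invariant (\ref{invariant-orbifold-local-model}) is a polynomial in $r$ as claimed.

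Next I would extract the constant term. The strategy is to show that as $r \to \infty$, all fixed loci in which a nontrivial amount of degree or genus is ``absorbed'' into the orbifold vertex over $\mathcal{D}_r$ contribute only positive powers of $r$, so they vanish upon taking $[\,\cdot\,]_{r^0}$. What survives is exactly the configuration in which the map meets $\mathcal{D}_r$ trivially (transversally, through the rubber over $D$), and this surviving contribution matches term-by-term the localization contribution on $(Y, D_0 \cup D_\infty)$ where the map meets the honest relative divisor $D_0$. Here I would use that the age $k_i/r \to 0$ so that in the $r^0$-limit the twisted evaluation data over $\mathcal{D}_r$ degenerates to the ordinary relative evaluation data over $D_0$ with tangency $k_i$, matching the weighted partition $k$ appearing in (\ref{invariant-relative-local-model}). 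The rubber integrals over $D$ are then literally identical on both sides, completing the comparison.

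The main obstacle, as in the curve case of \cite{TY18}, will be controlling the $r$-dependence of the orbifold vertex contributions and proving that they contribute no $r^0$-term beyond the one matching the relative theory. This requires a careful analysis of the $\mathbb{C}^*$-fixed loci of $\overline{M}_{g,k,n,\mu,d}(Y_{D_0,r}, D_\infty)$, in particular the Hurwitz/admissible-cover-type loci over $\mathcal{D}_r$ whose virtual contributions involve delicate Euler-class denominators and age corrections; the polynomiality input from \cite{JPPZ1} is exactly what is needed to make these contributions manageable, but assembling them and verifying the vanishing of all positive-$r$-power ``leakage'' into the relative-matching term is the technically demanding step. A secondary difficulty is bookkeeping the correspondence between orbifold twisted sectors and relative tangency multiplicities uniformly across all fixed loci, so that the $r^0$-coefficient reassembles precisely into the relative degeneration contribution rather than a combination that only agrees numerically.
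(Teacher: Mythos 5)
Your overall skeleton (localization into rubber integrals over $D$, polynomiality from \cite{JPPZ1}, extraction of the $r^0$-coefficient) matches the paper's strategy, but there are two genuine gaps. First, you never reduce the general insertions $\gamma_{m+i}$ to a case the localization comparison can actually reach. The localization/rubber argument only directly proves the identity when there is \emph{exactly one} insertion of the form $\tau_a([D_\infty]\cdot\delta)$ with all other classes pulled back from $D$: the class $\on{ev}_p^*([D_\infty])$ is precisely what rigidifies the rubber on the relative side (Lemma \ref{lemma-rigidification}, from \cite{MP}) and, on the orbifold side, forces the target to degenerate at $D_\infty$ so that the fixed-graph analysis collapses to a single rubber vertex (Lemmas \ref{lemma-orbifold-rubber} and \ref{lemma-orbifold-invariant-rubber}, giving Proposition \ref{proposition-local-model}). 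The theorem as stated allows arbitrary $\gamma_{m+i}$, including classes $\delta\cdot[D_0]$ and $\delta\cdot[D_\infty]$, and the paper needs a separate reduction: invariants with no $[D_0]$- or $[D_\infty]$-insertions vanish on both sides (with the no-insertion case handled by the divisor equation), invariants with $[D_\infty]$-insertions are handled by degenerating $(Y_{D_0,r},D_\infty)$ along $D_\infty$ so that both sides satisfy identically shaped degeneration formulas, and $[D_0]$-insertions are eliminated via the relation $[D_\infty]=[D_0]-c_1(L)$. Your claim that the non-orbifold insertions ``behave identically in both computations'' silently assumes away exactly this bookkeeping.

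Second, your mechanism for killing the orbifold-vertex contributions is misstated: it is not true, fixed locus by fixed locus, that stable vertices over $\mathcal{D}_r$ ``contribute only positive powers of $r$.'' For $g>0$ the vertex factor $\sum_i (t/r)^{g(v)-1+|E(v)|-i} c_i(-R^*\pi_*\mathcal{L})$ contains nonnegative powers of $t$, so a direct $t^0$-extraction does not eliminate these terms (this is exactly the higher-genus obstruction flagged in the remark at the end of Section \ref{section-genus-zero}). The paper's workaround is structural: first push forward by $\epsilon$ to $\overline{M}_{g,m+n+l(\mu),\pi_*d}(D)$, where the polynomiality of $\hat{c}_i=r^{2d-2g+1}\epsilon_*c_d(-R^*\pi_*\mathcal{L})$ from \cite{JPPZ1} applies (it is not known, and not needed, before pushforward), then substitute $s=tr$ and extract the joint $r^0s^0$-coefficient; only after this change of variables does each factor (\ref{contribution-0-pushforward}), (\ref{contribution-infty-pushforward}) carry at least one positive power of $r$, forcing a single rubber vertex over $\infty$. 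Similarly, polynomiality of the full invariant is not immediate from ``polynomiality of the cycle classes'': one must check (as in Lemma \ref{lemma-orbifold-invariant-rubber}) that the negative powers of $r$ arising when $i>g(v)$ are always accompanied by negative powers of $t$ and hence never survive to the $t^0$-coefficient. Without the pushforward and the $s=tr$ bookkeeping, your extraction argument would fail in higher genus.
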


Similarly, Theorem \ref{theorem-stationary} follows from the following theorem for $(\mathbb P^1[r],\infty)$ and $(\mathbb P^1,0,\infty)$.

\begin{theorem}\label{theorem-stationary-local}
For $r$ sufficiently large, the stationary orbifold-relative invariants of $(\mathbb P^1[r],\infty)$ are equal to the stationary relative invariants of $(\mathbb P^1,0,\infty)$:
\begin{align}\label{identity-stationary}
\langle \mathbf k|\prod_{i=1}^n \tau_{a_{m+i}}(\omega)|\mathbf \mu\rangle^{(\mathbb P^1,0,\infty)}_{g,\vec k,n,\vec\mu,d}=\langle \prod_{i=1}^n \tau_{a_{m+i}}(\omega)|\mathbf \mu\rangle^{(\mathbb P^1[r],\infty)}_{g,\vec k,n,\vec\mu,d}.
\end{align}
\end{theorem}

\begin{remark}
Theorem \ref{theorem-local-model} and Theorem \ref{theorem-stationary-local} can also be stated for disconnected invariants, since the proofs of Theorem \ref{theorem-local-model} and Theorem \ref{theorem-stationary-local} also work for disconnected invariants.
\end{remark}

\section{Local Model}\label{section-p1-bundle}
In this section, we prove Theorem \ref{theorem-local-model} by using virtual localization calculations of \cite{JPPZ18} to obtain identities of cycle classes on moduli spaces.

Let $D$ be a smooth projective variety equipped with a line bundle $L$, and let $Y$ be the total space of the $\mathbb P^1$-bundle
\[
\pi:\mathbb P(\mathcal O_D\oplus L)\rightarrow D.
\] 
Following \cite{MP}, let $e_{1},\ldots,e_{s}$ be a basis of $H^*(D,\mathbb Q)$. We view $e_{i}$ as an element of $H^*(Y,\mathbb Q)$ via pull-back by $\pi$. Let $[D_0], [D_\infty]\in H^2(Y,\mathbb Q)$ denote the cohomology classes associated to the zero and infinity divisors. The cohomological insertions of the invariants will be taken from the following classes in $H^*(Y,\mathbb Q)$:
\[
e_{1},\ldots,e_{s},[D_0]\cdot e_{1},\ldots,  [D_0]\cdot e_{s},[D_\infty]\cdot e_{1},\ldots,[D_\infty]\cdot e_{s}.
\]

We write $Y_{D_0,r}$ for the root stack of $Y$ constructed by taking $r$th root along the zero section $D_0$. The $r$-th root of $D_0$ is denoted by $\mathcal D_r$.

\subsection{Relative Invariants}
Consider the moduli space $\overline{M}_{g,\vec k,n,\vec \mu}(Y,D_0\cup D_\infty)$ of relative stable maps to $(Y,D_0\cup D_\infty)$ with tangency conditions at relative divisor $D_0$ (resp. $D_\infty$) given by the partition $\vec k$ (resp. $\vec \mu$) of $\int_d[D_0]$ (resp. $\int_d[D_\infty]$). The length of $\vec \mu$ is denoted by $l(\mu)$. Recall that the length of $\vec k$ is still denoted by $m$. The following relation between moduli space $\overline{M}_{g,\vec k,n,\vec \mu}(Y,D_0\cup D_\infty)$ of relative stable maps to rigid target and moduli space $\overline{M}_{g,\vec k,n,\vec\mu}(Y,D_0\cup D_\infty)^\sim$ of relative stable maps to non-rigid target is proven in \cite{MP}.
\begin{lemma}[\cite{MP}, Lemma 2]\label{lemma-rigidification}
Let $p$ be a non-relative marking with evaluation map
\[
\on{ev}_p: \overline{M}_{g,\vec k,n,\vec \mu,d}(Y,D_0\cup D_\infty) \rightarrow Y.
\]
Then, the following identities hold.
\begin{align}\label{identity-relative-cycle}
[\overline{M}_{g,\vec k,n,\vec \mu,d}(Y,D_0\cup D_\infty)^\sim]^{\on{vir}}=&\epsilon_*\left(\on{ev}_p^*([D_0])\cap[\overline{M}_{g,\vec k,n,\vec \mu,d}(Y,D_0\cup D_\infty)]^{\on{vir}} \right)\\
\notag =& \epsilon_*\left(\on{ev}_p^*([D_\infty])\cap[\overline{M}_{g,\vec k,n,\vec \mu,d}(Y,D_0\cup D_\infty)]^{\on{vir}} \right),
\end{align}
where
\[
\epsilon: \overline{M}_{g,\vec k,n,\vec \mu,d}(Y,D_0\cup D_\infty)\rightarrow \overline{M}_{g,\vec k,n,\vec \mu,d}(Y,D_0\cup D_\infty)^\sim
\]
is the canonical forgetful map.
\end{lemma}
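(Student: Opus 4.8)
The plan is to realize the forgetful map $\epsilon$ as the quotient by the fiberwise $\mathbb{C}^*$-action on $Y$ and to exhibit $\on{ev}_p^*([D_0])$ and $\on{ev}_p^*([D_\infty])$ as two slices of this action. First I would introduce the $\mathbb{C}^*$-action on $Y=\mathbb{P}(\mathcal O_D\oplus L)$ that scales the fibers of $\pi$ and fixes the sections $D_0$ and $D_\infty$ pointwise; its open orbit is $Y\setminus(D_0\cup D_\infty)$, a $\mathbb{C}^*$-bundle over $D$. This action induces a $\mathbb{C}^*$-action on the rigid moduli space $\overline{M}_{g,k,n,\mu,d}(Y,D_0\cup D_\infty)$, and the non-rigid space is exactly the space of relative stable maps to the target taken modulo this $\mathbb{C}^*$. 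Accordingly, away from the degeneration loci the map $\epsilon$ is a $\mathbb{C}^*$-torsor: its fiber over a non-rigid map records the choice of rigidification, i.e. of identification of the rubber target with $Y$. I would check that $\epsilon$ has relative dimension one and is compatible with the perfect obstruction theories, so that the virtual classes of the source and target correspond under the induced operations.

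The key geometric observation is the following. Along a single fiber of $\epsilon$, as the rigidification is rechosen, the composite $\on{ev}_p$ sweeps out inside $Y$ the full $\mathbb{C}^*$-orbit of the image of $p$, whose closure is the fiber $\mathbb{P}^1$ of $\pi$ over $\pi(\on{ev}_p(f))\in D$. This fiber $\mathbb{P}^1$ meets $D_0$ in exactly one point and $D_\infty$ in exactly one point. Hence the locus $\{\on{ev}_p\in D_0\}$, and likewise $\{\on{ev}_p\in D_\infty\}$, meets each $\epsilon$-fiber in a single rigidification and thus defines a section of $\epsilon$. I would then argue that $\epsilon$ restricts to a degree-one map from this slice onto the non-rigid space, so that
\[
\epsilon_*\!\left(\on{ev}_p^*([D_0])\cap[\overline{M}_{g,k,n,\mu,d}(Y,D_0\cup D_\infty)]^{\on{vir}}\right)=[\overline{M}_{g,k,n,\mu,d}(Y,D_0\cup D_\infty)^\sim]^{\on{vir}}
\]
with multiplicity one, and identically with $[D_\infty]$ in place of $[D_0]$. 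The virtual dimensions match, since capping with the divisor class $\on{ev}_p^*([D_0])$ and pushing forward along the relative-dimension-one map $\epsilon$ preserves the correct dimension.

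The equality of the two expressions in (\ref{identity-relative-cycle}) then follows because $D_0$ and $D_\infty$ furnish two slices of the same $\mathbb{C}^*$-action. Equivalently, on each fiber $\mathbb{P}^1$ of $\pi$ the point classes $[D_0\cap\mathbb{P}^1]$ and $[D_\infty\cap\mathbb{P}^1]$ are rationally equivalent, so $\on{ev}_p^*([D_0])$ and $\on{ev}_p^*([D_\infty])$ restrict to the same class on the fibers of $\epsilon$ and have equal $\epsilon$-pushforwards. One may also note that $[D_0]-[D_\infty]=\pi^*c$ for a class $c$ on $D$, so that $\on{ev}_p^*([D_0]-[D_\infty])$ is pulled back from the non-rigid space along $\epsilon$ and contributes nothing after pushforward by the projection formula.

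The main obstacle is making the torsor-and-slice picture rigorous at the level of virtual classes rather than coarse geometry. The loci $\{\on{ev}_p\in D_0\}$ sit in the boundary where the target degenerates and $p$ bubbles onto a rubber component over $D_0$, so I must verify that $\epsilon$ behaves like a genuine $\mathbb{C}^*$-bundle for the relevant Gysin and pushforward operations, that the generic stabilizer is trivial for the fiber degree in question so the multiplicity is exactly one, and that the obstruction theory of the rigid space pulls back to that of the non-rigid space so the slice carries $[\,\cdot\,]^{\on{vir}}$ faithfully. This compatibility of virtual classes under degeneration of the target is the technical heart of the statement, and is precisely what is established in \cite{MP}.
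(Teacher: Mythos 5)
Your geometric picture captures the right intuition, but as a proof it has a genuine gap at its central step, and you effectively concede this by outsourcing ``the technical heart'' to \cite{MP} --- which is circular here, since the statement being proved \emph{is} Lemma 2 of \cite{MP}. The paper's proof (following \cite{MP}) is by $\mathbb{C}^*$-virtual localization on $\overline{M}_{g,k,n,\mu,d}(Y,D_0\cup D_\infty)$: one chooses the equivariant lift of $[D_0]$ (resp.\ $[D_\infty]$) vanishing on the fixed loci over the opposite section, applies the virtual localization formula, and checks that in the nonequivariant limit the only surviving contribution is the distinguished fixed locus isomorphic to the rubber space, with coefficient one. Your torsor-and-slice argument is the shadow of this computation, but it cannot be made rigorous as stated. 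Concretely: by predeformability, the non-relative marking $p$ can never map to the relative divisor $D_0$ while the target is unexpanded, so the locus $\{\on{ev}_p\in D_0\}$ does \emph{not} ``meet each $\epsilon$-fiber in a single rigidification'' --- it is wholly contained in the boundary where the target expands and $p$ bubbles onto a rubber component; it consists of limit points of the $\mathbb{C}^*$-orbits of rigidifications, not points of those orbits. (You notice this in your final paragraph, but it contradicts the section claim on which your degree-one pushforward rests.) Moreover, $\epsilon$ is not a torsor even over unexpanded targets: $\mathbb{C}^*$-fixed maps exist (e.g.\ multiple covers of fibers totally ramified over $D_0$ and $D_\infty$ at the relative markings), and these are exactly the edge contributions that the localization formula tracks and your argument ignores. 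Showing that the boundary slice carries the virtual class with multiplicity one is not a routine compatibility check of obstruction theories; it is the entire content of the lemma, and localization is the mechanism that supplies it.

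A secondary gap: your alternative argument for the equality of the $[D_0]$- and $[D_\infty]$-pushforwards uses $[D_0]-[D_\infty]=\pi^*c_1(L)$, so that $\on{ev}_p^*([D_0]-[D_\infty])$ is $\epsilon^*$ of a class $\alpha$ on the rubber space, and the projection formula reduces you to $\alpha\cap\epsilon_*[\overline{M}_{g,k,n,\mu,d}(Y,D_0\cup D_\infty)]^{\on{vir}}$. But the needed vanishing $\epsilon_*[\overline{M}_{g,k,n,\mu,d}(Y,D_0\cup D_\infty)]^{\on{vir}}=0$ is not automatic --- it does not follow from fiber-dimension counting, since the moduli spaces need not have expected dimension and $[\,\cdot\,]^{\on{vir}}$ is not the fundamental class --- and in \cite{MP} this kind of statement is again extracted from the $\mathbb{C}^*$-localization computation. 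Since each expression in the lemma is separately identified with the rubber class by localization, this point is minor, but as written your ``two slices of the same action'' paragraph assumes what must be proved.
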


The proof of Lemma \ref{lemma-rigidification} is through $\mathbb C^*$-localization on the moduli space $\overline{M}_{g,\vec k,n,\vec\mu,d}(Y,D_0\cup D_\infty)$.
 The following identity directly follows from Lemma \ref{lemma-rigidification}.

\begin{lemma}\label{lemma-relative-invariant-rubber}
For $n>0$,
\begin{align}\label{identity-relative-invariant}
&\left\langle \prod_{i=1}^m\tau_{a_{i}}(\delta_{i})\left|\tau_{a_{m+1}}([D_\infty]\cdot \delta_{m+1})\prod_{i=m+2}^{m+n}\tau_{a_i}({\delta_i})\right|\mathbf \mu\right\rangle_{g,\vec k,n,\vec \mu,d}^{(Y,D_0\cup D_\infty)}\\
\notag =&\left\langle \prod_{i=1}^m\tau_{a_{i}}(\delta_i)\left|\prod_{i=m+1}^{m+n}\tau_{a_i}({\delta_i})\right|\mathbf \mu\right\rangle_{g,\vec k,n,\vec \mu,d}^{\sim,(Y, D_0\cup D_\infty)},
\end{align}
where $\delta_{i}\in \pi^*\left(H^*(D,\mathbb Q)\right)$, for $m+1\leq i\leq m+n$, are cohomology classes pulled back from $H^*(D,\mathbb Q)$.
\end{lemma}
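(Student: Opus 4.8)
The plan is to deduce Lemma~\ref{lemma-relative-invariant-rubber} directly from Lemma~\ref{lemma-rigidification} by choosing the distinguished non-relative marking in the rigidification identity to be the marked point carrying the insertion $\tau_{a_{m+1}}([D_\infty]\cdot\delta_{s_{m+1}})$. The key observation is that the extra factor $[D_\infty]$ appearing in this insertion is precisely what the map $\epsilon_*$ in \eqref{identity-relative-cycle} produces when one caps with $\on{ev}_p^*([D_\infty])$, so the $[D_\infty]$ in the integrand can be ``absorbed'' into the pushforward to the rubber moduli space.

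Concretely, I would set $p=m+1$ and work on the rigid moduli space $\overline{M}_{g,k,n,\mu,d}(Y,D_0\cup D_\infty)$. By the projection formula, the relative invariant on the left-hand side of \eqref{identity-relative-invariant} can be rewritten as an integral over the rigid space of the class $\on{ev}_{m+1}^*([D_\infty])$ capped against $[\overline{M}_{g,k,n,\mu,d}(Y,D_0\cup D_\infty)]^{\on{vir}}$, with the remaining insertions $\psi_{m+1}^{a_{m+1}}\on{ev}_{m+1}^*(\delta_{s_{m+1}})$ and $\prod_{i=m+2}^{m+n}\psi_i^{a_i}\on{ev}_i^*(\delta_{s_i})$ paired against it. Applying the second equality in \eqref{identity-relative-cycle} replaces $\on{ev}_{m+1}^*([D_\infty])\cap[\cdots]^{\on{vir}}$ by $\epsilon_*$ of that cap, i.e.\ by the virtual class of the non-rigid (rubber) moduli space. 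The projection formula for $\epsilon$ then lets me push the surviving $\psi$-classes and evaluation classes down to the rubber space, yielding exactly the rubber invariant on the right-hand side.

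For this to go through cleanly I need two compatibility facts. First, the forgetful map $\epsilon$ must identify the $\psi$-classes and the evaluation classes at the markings $m+1,\dots,m+n$ on the rigid space with their counterparts on the rubber space, so that capping with $\psi_i^{a_i}\on{ev}_i^*(\delta_{s_i})$ commutes with $\epsilon_*$ via the projection formula. This is where it matters that the insertions $\delta_{s_i}$ for $i\geq m+1$ are pulled back from $H^*(D,\mathbb Q)$: such classes are $\mathbb C^*$-invariant and descend under rigidification, so their evaluation-class pullbacks are genuinely pulled back along $\epsilon$. Second, the $\psi$-class at the distinguished marking $m+1$ must behave correctly under the rigidification; since $\delta_{s_{m+1}}$ is itself pulled back from $D$, the factor $\psi_{m+1}^{a_{m+1}}\on{ev}_{m+1}^*(\delta_{s_{m+1}})$ survives the pushforward in the same way.

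The main obstacle I anticipate is the bookkeeping around the distinguished marking and the precise statement that the $\psi$- and evaluation classes are pulled back under $\epsilon$ — in other words, verifying that the projection formula applies in the form I want and that no correction terms (from the difference between $\psi$-classes upstairs and downstairs, or from the rigidification of the target) appear. This is ultimately a standard consequence of the construction of the rubber space and the behavior of tautological classes under the rigidification morphism, but it must be checked that the hypothesis ``$\delta_{s_i}\in\pi^*(H^*(D,\mathbb Q))$'' is exactly what is needed to guarantee it. Once that compatibility is granted, the identity \eqref{identity-relative-invariant} is an immediate corollary of Lemma~\ref{lemma-rigidification} together with the projection formula.
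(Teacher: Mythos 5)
Your proof is correct and takes essentially the same route as the paper, which gives no separate argument but states that the identity ``directly follows'' from Lemma~\ref{lemma-rigidification}: the intended deduction is precisely yours, namely setting $p=m+1$, splitting $\tau_{a_{m+1}}([D_\infty]\cdot\delta_{s_{m+1}})$ so that the factor $\on{ev}_p^*([D_\infty])$ caps the virtual class as in \eqref{identity-relative-cycle}, and pushing the remaining classes forward along $\epsilon$ by the projection formula. Your identified compatibility conditions are the right ones and do hold here: the non-relative markings never lie on components contracted by $\epsilon$ (trivial cylinders carry only relative markings and nodes), so the $\psi$-classes match, and the hypothesis $\delta_{s_i}\in\pi^*\left(H^*(D,\mathbb Q)\right)$ is exactly what makes the evaluation classes descend to the rubber space.
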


\subsection{Orbifold-Relative Invariants}\label{sec:localization-rel-orb}
We use the localization formula of \cite{GP} and \cite{GV} (see also \cite{JPPZ}, \cite{Liu} and \cite{MR}) to study the moduli space $\overline{M}_{g,\vec k,n,\vec \mu}(Y_{D_0,r},D_\infty)$ with prescribed orbifold and relative conditions given by $\vec k$ and $\vec \mu$ respectively. Our goal is to find an identity (see Identity (\ref{identity-orbifold-rubber})) that is similar to Identity (\ref{identity-relative-cycle}), then relates orbifold-relative invariants of $(Y_{D_0,r},D_\infty)$ to rubber integrals as well. 

\subsubsection{The virtual localization formula}

The fiberwise $\mathbb C^*$-action on
\[
\pi:\mathbb P(\mathcal O_D\oplus L)\rightarrow D
\] 
induces a $\mathbb C^*$-action on $Y_{D_0,r}$ and, hence, a $\mathbb C^*$-action on  the moduli space $\overline{M}_{g,\vec k,n,\vec \mu,d}(Y_{D_0,r},D_\infty)$. The class $[\overline{M}_{g,\vec k,n,\vec \mu}(Y_{D_0,r},D_\infty)]^{\on{vir}}$ is computed in \cite[Section 3]{JPPZ18} via the virtual localization formula. For the purpose of our paper, we only need the explicit formula (see Lemma \ref{lemma-localization-formula}). Hence, we will state the formula in this section and refer the readers to \cite[Section 3]{JPPZ} for the derivation of the formula.

The $\mathbb C^*$-fixed loci of $\overline{M}_{g,\vec k,n,\vec \mu,d}(Y_{D_0,r},D_\infty)$ are labeled by decorated graphs. In order to state the virtual localization formula, we need to recall the definition of decorated graphs. We follow \cite{Liu} for the notation of decorated graphs. A decorated graph $\Gamma$ contains the following data.
\begin{itemize}
\item $V(\Gamma)$ is the set of vertices of $\Gamma$. 
Each vertex $v$ is decorated by the genus $g(v)$ and the degree $d(v)\in H_2(D,\mathbb Z)$. The degree $d(v)$ must be an effective curve class. The genus and degree conditions are required
\[
g=\sum_{v\in V(\Gamma)}g(v)+h^1(\Gamma) \quad \text{and} \quad d=\sum_{v\in V(\Gamma)}d(v).
\]
Each vertex $v$ is labeled by $0$ or $\infty$. The labeling map is denoted by 
\[
i:V(\Gamma)\rightarrow \{0,\infty\}.
\] 
\item $E(\Gamma)$ is the set of edges of $\Gamma$. We write $E(v)$ for the set of edges attached to the vertex $v\in V(\Gamma)$ and write $|E(v)|$ for the number of edges attached to the vertex $v\in V(\Gamma)$.
Each edge $e$ is decorated by the degree $d_e\in \mathbb Z_{ >0}$ corresponding to the $d_e$-th power map
\[
\mathbb P^1[r]\rightarrow \mathbb P^1[r].
\]
\item The set of legs is in bijective correspondence with the set of markings. For $1\leq j \leq m$,  the legs are labeled by $k_j\in \mathbb Z_{>0}$ and are incident to vertices labeled $0$. For $m+1\leq j\leq m+n$, the legs are labeled by $0$. For $m+n+1\leq j\leq m+n+l(\mu)$, the legs are labeled by $\mu_{j-m-n}\in \mathbb Z_{>0}$ and are incident to vertices labeled $\infty$. We write $S(v)$ to denote the set of markings assigned to the vertex $v$.

\item The set of flags of $\Gamma$ is defined to be
\[
F(\Gamma)=\{(e,v)\in E(\Gamma)\times V(\Gamma)|v\in e\}.
\]
If the flag is at $0$, then it is labeled by an element $k_{(e,v)}\in \mathbb Z_r$. In fact, in our example, 
\[
k_{(e,v)}=d_e,
\] 
by compatibility along the edge. See, for example, \cite{Johnson}, \cite{Liu} and \cite{JPT}.

\item $\Gamma$ is a connected graph, and $\Gamma$ is bipartite with respect to labeling $i$. Each edge is incident to a vertex labeled by $0$ and a vertex labeled by $\infty$.

\item  A vertex $v\in V(\Gamma)$ is stable if $2g(v)-2+\on{val}(v)> 0$, where $\on{val}(v)$ is the total numbers of marked points and incident edges associated to the vertex $v\in V(\Gamma)$. Otherwise, $v\in V(\Gamma)$ is called unstable. We write $V^S(\Gamma)$ for the set of stable vertices of $\Gamma$. We use $F^S(\Gamma)$ to denote the set of stable flags, that is, the set of flags whose associated vertices are stable.

\item The compatibility condition at a vertex $v$ over $0$:
\begin{align}\label{compatibility-vertex-0}
\sum_{j\in S(v)}k_j-\sum_{e\in E(v)}k_{(e,v)}=\int_{d(v)}c_1(L) \mod r.
\end{align}
The compatibility condition at a vertex is being used in the proof of \cite[Lemma 12]{JPPZ18}, which will be used later in this section.
\item The compatibility condition at a vertex $v$ over $\infty$:
\[
\sum_{e\in E(v)}k_{(e,v)}-\sum_{j\in S(v)}\mu_{j-m-n}=\int_{d(v)}c_1(L).
\]
\end{itemize}

Recall that a vertex $v\in V(\Gamma)$ is unstable if $d(v)=0$ and $2g(v)-2+\on{val}(v)\leq 0$. By \cite[Lemma 12]{JPPZ18}, for $r$ sufficiently large, there are only the following two types of unstable vertices:
\begin{itemize}
\item $v$ is labeled by $0$, $g(v)=0$, $v$ carries one marking and one incident edge;
\item $v$ is labeled by $\infty$, $g(v)=0$, $v$ carries one marking and one incident edge.
\end{itemize}


Following \cite{JPPZ18}, if the target expands at $D_\infty$, the $\mathbb C^*$-fixed locus corresponding to the decorated graph $\Gamma$ is isomorphic to
\[
\overline{M}_{\Gamma}=\prod_{v\in V^S(\Gamma), i(v)=0}\overline{M}_{g(v),\on{val}(v),d(v)}(\mathcal D_r)\times_{D^{|E(\Gamma)|}} \prod_{v\in V^S(\Gamma), i(v)=\infty}\overline{M}_{g(v),\on{val}(v),d(v)}(Y,D_0\cup D_\infty)^{\sim}
\]
quotiented by the automorphism group $\Aut(\Gamma)$ of $\Gamma$ and the product $\prod_{e\in E(\Gamma)}\mathbb Z_{d_e}$ of cyclic groups associated to the edges. 

If the target does not expand, then the moduli spaces of rubber maps do not appear and the invariant locus is the moduli space of stable maps to $\mathcal D_r$. That is,
\[
\overline{M}_{\Gamma}=\overline{M}_{g,m+n+l(\mu),\pi_*d}(\mathcal D_r),
\]
since there is only one vertex over $0$.
The natural morphism
\[
\iota:\overline{M}_{\Gamma}\rightarrow \overline{M}_{g,\vec k,n,\vec \mu,d}(Y_{D_0,r},D_\infty)
\]
is of degree $|\Aut(\Gamma)|\prod_{e\in E(\Gamma)}d_e$. 

The following localization formula is given in \cite[Section 3]{JPPZ18}.
\begin{lemma}\label{lemma-localization-formula}
The virtual localization formula is written as 
\begin{align}\label{localization-formula}
[\overline{M}_{g,\vec k,n,\vec \mu,d}(Y_{D_0,r}, D_\infty)]^{\on{vir}}=
\sum_{\Gamma}\frac{1}{|\on{Aut}(\Gamma)|\prod_{e\in E(\Gamma)}d_e} \cdot\iota_*\left(\frac{[\overline{M}_{\Gamma}]^{\on{vir}}}{e(\on{Norm}_{\Gamma}^{\on{vir}})}\right),
\end{align}
where the sum is taken over decorated graphs $\Gamma$; the inverse of the virtual normal bundle $\frac{1}{e(\on{Norm}_{\Gamma}^{\on{vir}})}$ is the product of the following factors when $r$ is sufficiently large.

\begin{itemize}
\item For each stable vertex $v$ over $0$ in $\Gamma$, there is a factor
\begin{align}
\left(\prod_{e\in E(v)}\frac{rd_e}{t+\on{ev}_{e}^*c_1(L)-d_e\bar{\psi}_{(e,v)}}\right)\cdot\left(\sum_{i=0}^{\infty}(t/r)^{g(v)-1+|E(v)|-i}c_i(-R^*\pi_*\mathcal L)\right),
\end{align}
where 
\[
\pi: \mathcal C_{g(v),\on{val}(v),d(v)}(\mathcal D_r)\rightarrow \overline{M}_{g(v),\on{val}(v),d(v)}(\mathcal D_r)
\]
 is the universal curve, 
\[
\mathcal L\rightarrow \mathcal C_{g(v),\on{val}(v),d(v)}(\mathcal D_r)
\] 
is the universal $r$-th root and $\mathcal O^{(1/r)}$ is a trivial line bundle with a $\mathbb C^*$-action of weight $1/r$.
\item If the target expands over the infinity section, there is a factor
\begin{align}
\frac{\prod_{e\in E(\Gamma)}d_e}{-t-\psi_\infty}.
\end{align}
\end{itemize}
\end{lemma}

\subsubsection{Identity on cycle classes}\label{section-identity-cycle-classes}
\begin{lemma}\label{lemma-orbifold-rubber}
Let $p$ be an interior marking, that is, $p$ is neither a relative marking nor an orbifold marking. For $r$ sufficiently large,
\begin{align}\label{identity-orbifold-rubber}
\left[\epsilon^{\on{orb}}_*\left(\on{ev}_p^*([D_\infty])\cap[\overline{M}_{g,\vec k,n,\vec \mu,d}(Y_{D_0,r}, D_\infty)]^{\on{vir}}\right)\right]_{r^0}=\epsilon^{\on{rel}}_*\left([\overline{M}_{g,\vec k,n,\vec \mu,d}(Y,D_0\cup D_\infty)^\sim]^{\on{vir}} \right),
\end{align}
where $\epsilon^{\on{orb}}$ and $\epsilon^{\on{rel}}$\footnote{ More precisely, $\epsilon^{\on{rel}}$ is the forgetful map to $ \overline{M}_{g,m+n+l(\mu),\pi_*d}(D)$ followed by the inclusion  $\overline{M}_{g,m+n+l(\mu),\pi_*d}(D)\hookrightarrow  \overline{M}_{g,m+n+l(\mu),d}(Y)$.} are forgetful maps 
\[
\epsilon^{\on{orb}}:\overline{M}_{g,\vec k,n,\vec \mu,d}(Y_{D_0,r}, D_\infty) \rightarrow \overline{M}_{g,m+n+l(\mu),d}(Y); 
\]
\[
\epsilon^{\on{rel}}: \overline{M}_{g,\vec k,n,\vec \mu,d}(Y,D_0\cup D_\infty)^\sim\rightarrow  \overline{M}_{g,m+n+l(\mu),d}(Y).
\]
\end{lemma}

\begin{proof}
The localization formula (\ref{localization-formula}) gives
\begin{align}
&\on{ev}_p^*([D_\infty])\cap[\overline{M}_{g,\vec k,n,\vec \mu,d}(Y_{D_0,r}, D_\infty)]^{\on{vir}}=\\
\notag & \qquad \sum_{\Gamma}\frac{1}{|\on{Aut}(\Gamma)|\prod_{e\in E(\Gamma)}d_e} \cdot\iota_*\left( \left(-\on{ev}^*_p(c_1(L))-t\right)\cdot\frac{[\overline{M}_{\Gamma}]^{\on{vir}}}{e(\on{Norm}_{\Gamma}^{\on{vir}})}\right),
\end{align}
where $-\on{ev}^*_p(c_1(L))-t$ is the restriction of the class $[D_\infty]$ to the infinity section $D_\infty$. Following Lemma \ref{lemma-localization-formula}, the inverse of the virtual normal bundle $\frac{1}{e(\on{Norm}_{\Gamma}^{\on{vir}})}$ is the product of the following factors

\begin{itemize}
\item for each stable vertex $v$ over the zero section, there is a factor
\begin{align}
&\left(\prod_{e\in E(v)}\frac{rd_e}{t+\on{ev}_{e}^*c_1(L)-d_e\bar{\psi}_{(e,v)}}\right)\cdot\left(\sum_{i=0}^{\infty}(t/r)^{g(v)-1+|E(v)|-i}c_i(-R^*\pi_*\mathcal L)\right)\\
\notag =& t^{-1}\left(\prod_{e\in E(v)}\frac{d_e}{1+(\on{ev}_{e}^*c_1(L)-d_e\bar{\psi}_{(e,v)})/t}\right)\cdot\left(\sum_{i=0}^{\infty}t^{g(v)-i}(r)^{i-g(v)+1}c_i(-R^*\pi_*\mathcal L)\right)\\
\notag=& t^{-1}\left(\prod_{e\in E(v)}\frac{d_e}{1+(\on{ev}_{e}^*c_1(L)-d_e\bar{\psi}_{(e,v)})/t}\right)\cdot\left(\sum_{i=0}^{\infty}(tr)^{g(v)-i}(r)^{2i-2g(v)+1}c_i(-R^*\pi_*\mathcal L)\right);
\end{align}
\item if the target expands over the infinity section, there is a factor
\begin{align}
\frac{\prod_{e\in E(\Gamma)}d_e}{-t-\psi_\infty}.
\end{align}
\end{itemize}
We consider the pushforward to the moduli space $\overline{M}_{g,m+n+l(\mu),\pi_*d}(D)$ by forgetful maps. Following \cite{JPPZ} and \cite{JPPZ18}, we want to extract the coefficient of $t^0r^0$ from the contributions\footnote{This is slightly different from \cite{JPPZ18}. In \cite{JPPZ18}, they considered localization formula for $\overline{M}_{g,\vec k,n,\vec \mu,d}(Y_{D_0,r}, D_\infty)$ multipled by $t$. In this paper, we consider localization formula for $\on{ev}_p^*([D_\infty])\cap[\overline{M}_{g,\vec k,n,\vec \mu,d}(Y_{D_0,r}, D_\infty)]^{\on{vir}}$, where a factor of $t$ will come from $\on{ev}_p^*([D_\infty])$. In other words, \cite{JPPZ18} wants the coefficient of $t^{-1}$ in the localization formula for $\overline{M}_{g,\vec k,n,\vec \mu,d}(Y_{D_0,r}, D_\infty)$, while we want the coefficient of $t^0$ in the localization formula for $\on{ev}_p^*([D_\infty])\cap[\overline{M}_{g,\vec k,n,\vec \mu,d}(Y_{D_0,r}, D_\infty)]^{\on{vir}}$.}. We set $s:=tr$ and extract $r^0s^0$-coefficient instead. Let
\[
\hat{c}_i=r^{2i-2g+1}\epsilon^{\on{orb}}_* c_i(-R^*\pi_*\mathcal L).
\] 
The inverse of the virtual normal bundle can be rewritten as the product of the factors
\begin{align}\label{contribution-0-pushforward}
\frac rs \prod_{e\in E(v)}\frac{d_e}{1+\frac{r}{s}(\on{ev}_{e}^*c_1(L)-d_e\psi_{(e,v)})}\left(\sum_{i=0}^{\infty}\hat{c}_i s^{g(v)-i}\right), \quad \text{for } v\in V^S(\Gamma)\cap i^{-1}(0);
\end{align}
and 
\begin{align}\label{contribution-infty-pushforward}
-\frac rs\epsilon^{\on{rel}}_*\left(\frac{\prod_{e\in E(\Gamma)}d_e}{1+\frac rs \psi_\infty}\right), \quad \text{if the target expands}.
\end{align}
\cite[Corollary 11]{JPPZ18} states that, for each $i\geq 0$, the class $\hat{c}_i$ is a polynomial in $r$ when $r$ is sufficiently large.

In addition, we have
\[
-\on{ev}^*_p(c_1(L))-t=-\on{ev}^*_p(c_1(L))-\frac sr.
\]
Since the irreducible component containing the non-relative and non-orbifold marked point $p$ maps to $D_\infty$, the target always expands at $D_\infty$. Therefore, there is exactly one factor of  (\ref{contribution-infty-pushforward}) from contributions at $D_\infty$.

Each factor of (\ref{contribution-0-pushforward}) and (\ref{contribution-infty-pushforward}) is of positive power in $r$ and contributes at least one $r$. Therefore, to extract the coefficient of $r^0$, there can be only one such factor, which, of course, has to be  the factor (\ref{contribution-infty-pushforward}) from the only stable vertex over the infinity divisor (there is only one stable vertex over the infinity because there are only unstable vertices over $0$ and the decorated graph is connected). Note that the term $\on{ev}^*_p(c_1(L))$ also disappears, because its product with (\ref{contribution-0-pushforward}) and (\ref{contribution-infty-pushforward}) only produces positive powers of $r$. Therefore, the fixed locus is described by the decorated graph with one stable vertex of full genus $g$ over the infinity section $D_\infty$ and $m$ unstable vertices over the zero section $\mathcal D_r$.

The appearance of higher powers of the target descendant class $\psi_\infty$ in the expansion of (\ref{contribution-infty-pushforward}) will also contribute positive power of $r$, hence the terms involving $\psi_\infty$ are not allowed either. 

Then we extract the coefficient of $s^0$, the result is exactly the right-hand side of (\ref{identity-orbifold-rubber}).

\end{proof}

We consider the invariant
\begin{align}\label{invariant-lemma-orbifold-rubber}
\left\langle \left.\left( \prod_{i=1}^m\tau_{a_{i}}(\delta_i)\right) \tau_{a_{m+1}}([D_\infty]\cdot \delta_{m+1})\prod_{i=m+2}^{m+n}\tau_{a_i}({\delta_i})\right|\mathbf \mu\right\rangle_{g,\vec k,n,\vec \mu,d}^{(Y_{D_0,r}, D_\infty)},
\end{align}
where $\delta_i\in \pi^*\left(H^*(D,\mathbb Q)\right)$, for $m+1\leq i\leq m+n$, are cohomology classes pulled back from $H^*(D,\mathbb Q)$. We have the following relation between orbifold-relative invariants of $(Y_{D_0,r},D_\infty)$ and rubber integrals.

\begin{lemma}\label{lemma-orbifold-invariant-rubber}
For $r$ sufficiently large and $n>0$,
 the orbifold-relative Gromov-Witten invariant (\ref{invariant-lemma-orbifold-rubber}) of $(Y_{D_0,r}, D_\infty)$  is a polynomial in $r$. Moreover,
\begin{align}\label{identity-invariant-orbifold-rubber}
&\left[\left\langle \left. \left( \prod_{i=1}^m\tau_{a_{i}}(\delta_i)\right)\tau_{a_{m+1}}([D_\infty]\cdot \delta_{m+1})\prod_{i=m+2}^{m+n}\tau_{a_i}({\delta_i})\right|\mathbf \mu\right\rangle_{g,\vec k,n,\vec \mu,d}^{(Y_{D_0,r}, D_\infty)}\right]_{r^0}\\
\notag=&\left\langle \prod_{i=1}^m\tau_{a_{i}}(\delta_i)\left|\prod_{i=m+1}^{m+n}\tau_{a_i}({\delta_i})\right|\mathbf \mu\right\rangle_{g,\vec k,n,\vec \mu,d}^{\sim,(Y, D_0\cup D_\infty)}.
\end{align}
\end{lemma}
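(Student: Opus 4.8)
The plan is to deduce the invariant identity (\ref{identity-invariant-orbifold-rubber}) from the cycle-class identity of Lemma \ref{lemma-orbifold-rubber} in essentially the way that Lemma \ref{lemma-relative-invariant-rubber} was deduced from Lemma \ref{lemma-rigidification}; the only new feature is that the forgetful map $\epsilon$ now lands on $\overline{M}_{g,m+n+l(\mu),\pi_*d}(D)$ (abbreviated $\overline{M}(D)$) rather than on a rubber space. Write $p=m+1$ for the distinguished marked point carrying $\tau_{a_{m+1}}([D_\infty]\cdot\delta_{s_{m+1}})$, and factor its insertion as $\on{ev}_p^*([D_\infty])\cdot\bar\psi_{m+1}^{a_{m+1}}\on{ev}_{m+1}^*(\delta_{s_{m+1}})$. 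By definition the orbifold invariant (\ref{invariant-lemma-orbifold-rubber}) is then the integral over $[\overline{M}_{g,k,n,\mu,d}(Y_{D_0,r},D_\infty)]^{\on{vir}}$ of $\on{ev}_p^*([D_\infty])$ against the remaining descendant and cohomology insertions together with the orbifold weights $\prod_{j=1}^m\on{ev}_j^*(\delta_{s_j})$.

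First I would check that every remaining insertion descends along $\epsilon$. The cohomology classes are immediate: since each $\delta_{s_i}$ lies in $\pi^*H^*(D,\mathbb Q)$ and the evaluation maps commute with $\epsilon$ and $\pi$, we have $\on{ev}_i^*(\delta_{s_i})=\epsilon^*\overline{\on{ev}}_i^*(\delta_{s_i})$, where $\overline{\on{ev}}_i$ is the evaluation map on $\overline{M}(D)$. Granting that the class $\alpha:=\prod_{i=m+1}^{m+n}\psi_i^{a_i}\,\overline{\on{ev}}_i^*(\delta_{s_i})\cdot\prod_{j=1}^m\overline{\on{ev}}_j^*(\delta_{s_j})$ on $\overline{M}(D)$ pulls back to the integrand (the point addressed below), the projection formula rewrites the orbifold invariant as
\begin{align*}
\int_{\overline{M}(D)}\alpha\cap\epsilon_*\left(\on{ev}_p^*([D_\infty])\cap[\overline{M}_{g,k,n,\mu,d}(Y_{D_0,r},D_\infty)]^{\on{vir}}\right).
\end{align*}
Because $\alpha$ is independent of $r$ and integration is $r$-linear, taking the $r^0$-coefficient commutes with capping against $\alpha$; applying Lemma \ref{lemma-orbifold-rubber} then replaces the pushed-forward virtual class by $\epsilon_*[\overline{M}_{g,k,n,\mu,d}(Y,D_0\cup D_\infty)^{\sim}]^{\on{vir}}$. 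Reversing the projection formula on the rubber side and recognizing $\epsilon^*\alpha$ as the rubber descendant integrand (with the relative weights $\prod_{j=1}^m\on{ev}_j^*(\delta_{s_j})$ now sitting at the $D_0$-relative markings) identifies the outcome with the rubber invariant on the right-hand side of (\ref{identity-invariant-orbifold-rubber}).

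The polynomiality assertion follows from the localization formula assembled in Section \ref{section-normal-bundle}. After pushing forward to $\overline{M}(D)$, each graph contribution is a product of the vertex factors (\ref{contribution-0-pushforward}) and (\ref{contribution-infty-pushforward}), whose only $r$-dependence sits in the classes $\hat c_i$; these are polynomial in $r$ for $r$ large by \cite{JPPZ1}, the edge and node contributions being trivial in that range. Extracting the $t^0$-coefficient of a class of fixed dimension then leaves a polynomial in $r$, which is the first assertion of the lemma.

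The step I expect to be the main obstacle is justifying that the descendant insertions descend along $\epsilon$, i.e.\ that $\bar\psi_i$ may be replaced by $\epsilon^*\psi_i$. Since $\epsilon$ contracts fiber components of the domain, the cotangent comparison generically produces boundary correction terms supported where a descendant marking lies on a contracted component. I expect to resolve this by incorporating the descendant classes directly into the localization computation proving Lemma \ref{lemma-orbifold-rubber}: the $r^0$-contribution is concentrated on the single full-genus rubber vertex over $D_\infty$, where the markings $m+2,\dots,m+n$ are forced to sit (the unstable vertices over $\mathcal D_r$ carry only orbifold markings), so the $\bar\psi_i$ restrict there to the genuine rubber cotangent classes and no correction survives. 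Alternatively, one argues that the comparison terms are structurally identical on $\overline{M}(Y_{D_0,r},D_\infty)$ and on the rubber space, depending only on the local geometry of the marking and not on $r$, hence cancel in pairs after extracting the $r^0$-coefficient.
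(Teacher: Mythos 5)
Your resolution of the descendant issue is exactly the paper's proof, and your first (projection-formula) route should simply be discarded rather than repaired: the paper never descends the integrand along $\epsilon$ --- precisely because, as you flag, $\bar{\psi}_i\neq\epsilon^*\psi_i$ --- but instead re-runs the localization of Lemma \ref{lemma-orbifold-rubber} with all insertions in place. Remark \ref{remark-unstable-vertex} guarantees that none of the $n$ non-orbifold, non-relative markings can lie on an unstable vertex over $0$ (and the relative markings at $D_\infty$ carry no descendants here), so on every fixed locus each $\bar{\psi}_i$ restricts to an honest, non-equivariant psi class on a stable vertex and contributes no extra powers of $t$; the classes $\delta_{s_i}$ pulled back from $D$ contribute none either. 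The $t$- and $r$-power bookkeeping of Lemma \ref{lemma-orbifold-rubber} is therefore unchanged, the $r^0s^0$-extraction again concentrates on the graph with a single full-genus rubber vertex over $D_\infty$, where the $\bar{\psi}_i$ become the rubber cotangent classes, yielding (\ref{identity-invariant-orbifold-rubber}). Your alternative suggestion that the $\psi$-comparison corrections \emph{cancel in pairs} is unsubstantiated and unnecessary.

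The polynomiality argument, however, has a genuine gap. It is not true that the only $r$-dependence sits in the classes $\hat{c}_i$: the factors (\ref{contribution-0-pushforward}) and (\ref{contribution-infty-pushforward}) carry explicit powers of $r$ (these come from the node-smoothing factors $1/e(N_{(e,v)})$ and $1/e(N_\infty)$, which persist for all $r$; only the normalization-sequence edge and node contributions are trivial for large $r$). In the variables $(t,r)$ the vertex factor over $0$ reads $\frac{1}{t}\prod_{e\in E(v)}\frac{d_e}{1-\frac{d_e}{t}\bar{\psi}_{(e,v)}}\sum_{i=1}^{g(v)-1+|E(v)|}\hat{c}_i(tr)^{g(v)-i}$, which contains genuinely negative powers of $r$ whenever $i>g(v)$. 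So polynomiality of the $\hat{c}_i$ alone does not give polynomiality of the invariant, and your step \emph{extracting the $t^0$-coefficient leaves a polynomial in $r$} is unjustified as stated: a priori the $t^0$-coefficient could retain negative powers of $r$. The paper closes this with one further observation, which is the actual content of its polynomiality proof: negative powers of $r$ occur only through $(tr)^{g(v)-i}$ with $i>g(v)$, hence always paired with an equal negative power of $t$; every positive power of $t$ likewise arises only from a factor of $tr$ and so carries a matching positive power of $r$, while all remaining factors contribute non-positive powers of $t$ only. Consequently any monomial surviving at $t^0$ has nonnegative total $r$-power, and the non-equivariant limit is a polynomial in $r$.
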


\begin{proof}

Identity (\ref{identity-invariant-orbifold-rubber}) follows from Identity (\ref{identity-orbifold-rubber}) in Lemma (\ref{lemma-orbifold-rubber}) as the invariants are defined by integrating against the virtual fundamental class and pushing forward to a point.

Polynomiality of the invariant (\ref{invariant-lemma-orbifold-rubber}) follows from the localization analysis and the polynomiality of the class $\hat{c}_i$. Indeed, it is sufficient to consider the factor (\ref{contribution-0-pushforward}):
\begin{align}
\frac 1t \prod_{e\in E(v)}\frac{d_e}{1+\frac{1}{t}(\on{ev}_{e}^*c_1(L)-d_e\psi_{(e,v)})}\left(\sum_{i=0}^{\infty}\hat{c}_i (tr)^{g(v)-i}\right).,
\end{align}
as it is the only factor that depends on $r$.
Negative power of $r$ appears only when $i>g(v)$, but the appearance of negative power of $r$ also results in the same negative power of $t$ in the factor. Hence, negative powers of $r$ do not contribute to the coefficient of $t^0$.
\end{proof}

Combining Lemma \ref{lemma-relative-invariant-rubber} and Lemma \ref{lemma-orbifold-invariant-rubber}, we obtain the identity between relative invariants of $(Y,D_0\cup D_\infty)$ and orbifold-relative invariants of $(Y_{D_0,r},D_\infty)$ with exactly one class of the form $\tau_a([D_\infty]\cdot \delta)$.

\begin{proposition}\label{proposition-local-model}
For $r$ sufficiently large,
\begin{align}
&\left[\left\langle \left. \left( \prod_{i=1}^m\tau_{a_{i}}(\delta_i)\right)\tau_{a_{m+1}}([D_\infty]\cdot \delta_{m+1})\prod_{i=m+2}^{m+n}\tau_{a_i}({\delta_i})\right|\mathbf \mu\right\rangle_{g,\vec k,n,\vec \mu,d}^{(Y_{D_0,r}, D_\infty)}\right]_{r^0}=\\
\notag &\qquad \qquad \left\langle \prod_{i=1}^m\tau_{a_{i}}(\delta_i)\left|\tau_{a_{m+1}}([D_\infty]\cdot \delta_{m+1})\prod_{i=m+2}^{m+n}\tau_{a_i}({\delta_i})\right|\mathbf \mu\right\rangle_{g,\vec k,n,\vec \mu,d}^{(Y,D_0\cup D_\infty)}.
\end{align}
\end{proposition}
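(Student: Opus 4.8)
The plan is to obtain this identity as an immediate consequence of the two rubber reductions already established, namely Lemma \ref{lemma-relative-invariant-rubber} and Lemma \ref{lemma-orbifold-invariant-rubber}. The conceptual point is that inserting the distinguished class $[D_\infty]\cdot\delta_{s_{m+1}}$ at a non-relative, non-orbifold marked point rigidifies the target in both the relative and the relative-orbifold geometries: on each side it converts an invariant on the rigid target into a rubber integral over the base $D$. Since both sides rigidify to the \emph{same} rubber integral, the two invariants are forced to agree once one extracts the $r^0$-coefficient on the orbifold side.

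Concretely, I would argue by transitivity through the common rubber integral
\[
\left\langle k\left|\prod_{i=m+1}^{m+n}\tau_{a_i}({\delta_{s_i}})\right|\mu\right\rangle_{g,k,n,\mu,d}^{\sim,(Y, D_0\cup D_\infty)}.
\]
By Lemma \ref{lemma-orbifold-invariant-rubber}, the $r^0$-coefficient of the relative-orbifold invariant of $(Y_{D_0,r},D_\infty)$ appearing on the left-hand side equals this rubber integral; by Lemma \ref{lemma-relative-invariant-rubber}, the relative invariant of $(Y,D_0\cup D_\infty)$ on the right-hand side equals the very same rubber integral. Chaining these two equalities yields the stated equality of the two invariants. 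Note that the hypothesis $n>0$ required by both lemmas is automatically satisfied here, since the distinguished insertion $\tau_{a_{m+1}}([D_\infty]\cdot\delta_{s_{m+1}})$ already supplies one non-relative marking.

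There is essentially no new obstacle at the level of this proposition, since all of the analytic and combinatorial work has been front-loaded into the two lemmas. The genuine difficulty resides in Lemma \ref{lemma-orbifold-invariant-rubber}, whose proof rests on the $\mathbb C^*$-localization analysis of Section \ref{section-normal-bundle} together with the polynomiality of the classes $\hat{c}_i$ from \cite{JPPZ1}; it is that polynomiality which guarantees the orbifold invariant is a polynomial in $r$, so that the $r^0$-coefficient is well defined and isolates precisely the single-stable-vertex-over-$D_\infty$ contribution matching the rubber geometry. Granting these inputs, the present statement is purely formal, and I would keep its proof to a one-line appeal combining Lemma \ref{lemma-relative-invariant-rubber} and Lemma \ref{lemma-orbifold-invariant-rubber}.
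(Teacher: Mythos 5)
Your proposal is correct and coincides exactly with the paper's own argument: Proposition \ref{proposition-local-model} is obtained there precisely by chaining Lemma \ref{lemma-relative-invariant-rubber} and Lemma \ref{lemma-orbifold-invariant-rubber} through the common rubber integral $\left\langle k\left|\prod_{i=m+1}^{m+n}\tau_{a_i}({\delta_{s_i}})\right|\mu\right\rangle_{g,k,n,\mu,d}^{\sim,(Y, D_0\cup D_\infty)}$, with all substantive work front-loaded into those two lemmas. Your observation that the distinguished insertion automatically supplies the required $n>0$ marking is also consistent with the paper's setup.
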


\subsection{Proof of Theorem \ref{theorem-local-model}}\label{sec:proof-thm}

In this section, we complete the proof of  Theorem \ref{theorem-local-model}, hence also complete the proof of Theorem \ref{main-theorem}. A special case of Theorem \ref{theorem-local-model} is already given in Proposition \ref{proposition-local-model}. Indeed, the general case of Theorem \ref{theorem-local-model} can be derived from Proposition \ref{proposition-local-model}. In other words, we claim the following.

\begin{lemma}\label{lemma-prop-imply-thm}
All relative Gromov--Witten invariants of $(Y,D_0\cup D_\infty)$ in (\ref{invariant-relative-local-model}) and all relative-orbifold Gromov--Witten invariants of $(Y_{D_0,r}, D_\infty)$  in (\ref{invariant-orbifold-local-model}) satisfy the same universal formulas in which they are determined by invariants of the form in Proposition \ref{proposition-local-model}.
\end{lemma}

We need to prove the identity for the following three types of invariants. The following three types of invariants generate all Gromov--Witten invariants of interest following the description of the cohoological insertions of the invariants at the beginning of Section \ref{section-p1-bundle}.

\begin{description}
\item [Type I] No descendant insertions of the form $\tau_{a}([D_0]\cdot \delta)$ or $\tau_{a}([D_\infty]\cdot \delta)$, where $a\in \mathbb Z_{\geq 0}$ and $\delta \in H^*(D,\mathbb Q)$.

Suppose $\int_d[D_\infty]\neq 0$, by the divisor equation, we have

\begin{align*}
&\left\langle \prod_{i=1}^m\tau_{a_{i}}(\delta_i)\left|\prod_{i=1}^n\tau_{a_{m+i}}(\delta_{m+i})\right|\mathbf \mu \right\rangle^{(Y,D_0\cup D_\infty)}_{g,\vec k,n,\vec \mu,d}\\
=&\frac{1}{\int_d[D_\infty]}\left\langle \prod_{i=1}^m\tau_{a_{i}}(\delta_i)\left|\tau_0([D_\infty])\prod_{i=1}^n\tau_{a_{m+i}}(\delta_{m+i})\right|\mathbf \mu \right\rangle^{(Y,D_0\cup D_\infty)}_{g,\vec k,n+1,\vec \mu,d}\\
&-\frac{1}{\int_d[D_\infty]}\sum_{j=1}^{n}\left\langle \prod_{i=1}^m\tau_{a_{i}}(\delta_i)\left|\tau_{a_{m+j}-1}([D_\infty]\cdot \delta_{m+j})\prod_{i\in \{1,\ldots,n\}\setminus \{j\}}\tau_{a_{m+i}}(\delta_{m+i})\right|\mathbf \mu\right\rangle^{(Y,D_0\cup D_\infty)}_{g,\vec k,n+1,\vec \mu,d}.
\end{align*}
Applying the divisor equation to the corresponding orbifold-relative invariant of $(Y_{D_0,r},D_\infty)$ yields
\begin{align*}
&\left\langle\left.\left( \prod_{i=1}^m\tau_{a_{i}}(\delta_i)\right)\prod_{i=1}^n\tau_{a_{m+i}}(\delta_{m+i})\right|\mathbf \mu\right\rangle^{(Y_{D_0,r},D_\infty)}_{g,\vec k,n,\vec \mu,d}\\
=&\frac{1}{\int_d[D_\infty]}\left\langle\left.\left( \prod_{i=1}^m\tau_{a_{i}}(\delta_i)\right)\tau_0([D_\infty])\prod_{i=1}^n\tau_{a_{m+i}}(\delta_{m+i})\right|\mathbf \mu\right\rangle^{(Y_{D_0,r},D_\infty)}_{g,\vec k,n+1,\vec \mu,d}\\
&-\frac{1}{\int_d[D_\infty]}\sum_{j=1}^{n}\left\langle\left.\left( \prod_{i=1}^m\tau_{a_{i}}(\delta_i)\right)\tau_{a_{m+j}-1}([D_\infty]\cdot \delta_{m+j})\prod_{i\in \{1,\ldots,n\}\setminus \{j\}}\tau_{a_{m+i}}(\delta_{m+i})\right|\mathbf \mu\right\rangle^{(Y_{D_0,r},D_\infty)}_{g,\vec k,n+1,\vec \mu,d}.
\end{align*}

Therefore, the divisor equations for invariants of $(Y,D_0\cup D_\infty)$ and invariants of  $(Y_{D_0,r},D_\infty)$ take the same form. Hence Theorem \ref{theorem-local-model} for invariants of {\bf Type I} follows from Proposition \ref{proposition-local-model} by divisor equations when $\int_d[D_\infty]\neq 0$.

Suppose $\int_d[D_\infty]=0$ and there is at least one non-relative marked point, we may rewrite the relative invariant (\ref{invariant-relative-local-model}) of $(Y,D_0\cup D_\infty)$ as
\begin{align}\label{relative-type-I}
\left\langle \prod_{i=1}^m\tau_{a_{i}}(\delta_i)\left|\prod_{i=1}^n\tau_{a_{m+i}}(\delta_{m+i})\right|\mathbf \mu\right\rangle^{(Y,D_0\cup D_\infty)}_{g,\vec k,n,\vec \mu,d},
\end{align}
where $\delta_{m+i}\in \pi^* H^*(D,\mathbb Q)$ for $1\leq i\leq n$. In this case, decorated graphs in the localization computation do not have edges, hence there is only one vertex. Therefore, the $\mathbb C^*$-fixed locus is just the moduli space rubber maps: $\overline{M}_{g,\vec k,n,\vec \mu,d}(Y,D_0\cup D_\infty)^\sim$. The invariant (\ref{relative-type-I}) is zero because the virtual dimension of the $\mathbb C^*$-fixed locus is $1$ less than the virtual dimension of $\overline{M}_{g,\vec k,n,\vec \mu,d}(Y,D_0\cup D_\infty)$.
Consider the corresponding orbifold invariant of $(Y_{D_0,r},D_\infty)$,
\begin{align}\label{orbifold-type-I}
\left\langle\left. \prod_{i=1}^m\tau_{a_{i}}(\delta_i)\prod_{i=1}^n\tau_{a_{m+i}}(\delta_{m+i})\right|\mathbf \mu\right\rangle^{(Y_{D_0,r},D_\infty)}_{g,\vec k,n,\vec \mu,d}.
\end{align}
Again, the decorated graph has no edge. By the virtual dimension constraint and the localization formula (\ref{localization-formula}), the coefficient of $t^0r^0$ of the invariant (\ref{orbifold-type-I}) is zero. 

Suppose $\int_d[D_\infty]=0$ and there is no non-relative marked point. Choose a class $H\in \pi^*H^2(D,\mathbb Q)$, such that $\int_d H\neq 0$. By divisor equation, this type of invariant can be reduced to the {\bf Type I} invariants with one non-relative marked point of insertion $H$.

Hence we have completed the proof for {\bf Type I} invariants.

\item[Type II] At least one descendant insertions of the form $\tau_{a}([D_\infty]\cdot \delta)$ and no descendant insertions of the form $\tau_{a}([D_0]\cdot \delta)$.

\begin{lemma}\label{lemma-type-I-imply-type-II}
Theorem \ref{theorem-local-model} for invariants of {\bf Type II} follows from the result for invariants of {\bf Type I}.
\end{lemma}

\begin{proof}
We may rewrite the invariant (\ref{invariant-relative-local-model})  of $(Y_{D_0,r},D_\infty)$ as
\begin{align}\label{invariant-orbifold-type-II}
\left\langle \left.\left( \prod_{i=1}^m\tau_{a_{i}}(\delta_i)\right)\prod_{i=1}^{n_0}\tau_{a_{m+i}}(\delta_{m+i})\prod_{i=1}^{n_\infty}\tau_{a_{m+n_0+i}}([D_\infty]\cdot \delta_{m+n_0+i})\right|\mathbf \mu\right\rangle^{(Y_{D_0,r},D_\infty)}_{g,\vec k,n_0+n_\infty,\vec \mu,d}.
\end{align}
We can apply degeneration formula to $(Y_{D_0,r},D_\infty)$ over the infinity divisor $D_\infty$. Hence the invariant (\ref{invariant-orbifold-type-II}) equals to
\begin{align}\label{degeneration-type-II-orbifold-invariant}
&\sum \frac{ \prod_i \eta_i}{|\Aut(\mathbf \eta)|}\left\langle \left.\left( \prod_{i=1}^m\tau_{a_{i}}(\delta_i)\right)\prod_{i\in S}\tau_{a_{m+i}}(\delta_{m+i})\right|\mathbf \eta\right\rangle^{\bullet, (Y_{D_0,r},D_\infty)}_{g_1,\vec k,|S|,\vec \eta,d_1} \cdot\\
\notag & \qquad \left\langle \mathbf \eta^\vee\left|\prod_{i\in \{1,\ldots,n_0\}\setminus S}\tau_{a_{m+i}}(\delta_{m+i})\prod_{i=1}^{n_\infty}\tau_{a_{m+n_0+i}}([D_\infty]\cdot \delta_{m+n_0+i})\right|\mathbf \mu\right\rangle^{\bullet, (Y,D_0\cup D_\infty)}_{g_2,\vec \eta,n_0-|S|+n_\infty,\vec \mu,d_2}.
\end{align}

 The relative invariant of $(Y,D_0\cup D_\infty)$ corresponding to the invariant (\ref{invariant-orbifold-type-II}) is

\begin{align}\label{invariant-relative-type-II}
\left\langle \prod_{i=1}^m\tau_{a_{i}}(\delta_i)\left|\prod_{i=1}^{n_0}\tau_{a_{m+i}}(\delta_{m+i})\prod_{i=1}^{n_\infty}\tau_{a_{m+n_0+i}}([D_\infty]\cdot \delta_{m+n_0+i})\right|\mathbf \mu\right\rangle^{(Y,D_0\cup D_\infty)}_{g,\vec k,n_0+n_\infty,\vec \mu,d}.
\end{align}
Applying the degeneration formula, the invariant (\ref{invariant-relative-type-II}) equals to
\begin{align}\label{degeneration-type-II-relative-invariant}
&\sum \frac{\prod_i \eta_i}{|\Aut(\mathbf \eta)|}\left \langle \prod_{i=1}^m\tau_{a_{i}}(\delta_i)\left|\prod_{i\in S}\tau_{a_{m+i}}(\delta_{m+i})\right|\mathbf \eta\right\rangle^{\bullet, (Y,D_0\cup D_\infty)}_{g_1,\vec k,|S|,\vec \eta,d_1} \cdot\\
\notag & \qquad \left\langle \mathbf \eta^\vee\left|\prod_{i\in \{1,\ldots,n_0\}\setminus S}\tau_{a_{m+i}}(\delta_{m+i})\prod_{i=1}^{n_\infty}\tau_{a_{m+n_0+i}}([D_\infty]\cdot \delta_{m+n_0+i})\right|\mathbf \mu\right\rangle^{\bullet, (Y,D_0\cup D_\infty)}_{g_2,\vec \eta,n_0-|S|+n_\infty,\vec \mu,d_2}.
\end{align}
 The {\bf Type II} orbifold-relative invariants of $(Y_{D_0,r},D_\infty)$ and relative invariants of $(Y,D_0\cup D_\infty)$ satisfy the same form of degeneration formula. Note that the invariants on the first line of (\ref{degeneration-type-II-orbifold-invariant}) and the invariants on the first line of (\ref{degeneration-type-II-relative-invariant}) are of {\bf Type I}. Hence Theorem \ref{theorem-local-model} for invariants of {\bf Type II} follows from the result for {\bf Type I} invariants.
\end{proof}

\item[Type III] At least one descendant insertion of the form $\tau_{a}([D_0]\cdot \delta)$.

The basic divisor relation in $H^2(Y,\mathbb Q)$ gives
\[
[D_\infty]=[D_0]-c_1(L).
\]
Using this formula, invariants of {\bf Type III} can be written as sum of invariants of {\bf Type I} and {\bf Type II}. Hence Theorem \ref{theorem-local-model} for invariants of {\bf Type III} follows from Theorem \ref{theorem-local-model} for {\bf Type I} and {\bf Type II} invariants.
\end{description}

It is straightforward to see that the polynomiality of the orbifold-relative invariant (\ref{invariant-orbifold-local-model}) of $(Y_{D_0,r},D_\infty)$ follows from the above discussion. Indeed, the polynomiality eventually reduces to the polynomiality for the invariants in Proposition \ref{proposition-local-model}, as the universal formulas that we have described in {\bf Type I}, {\bf Type II}  and {\bf Type III} are polynomials, so they preserve the polynomiality. The polynomiality for invariants in Proposition (\ref{proposition-local-model}) is proved in Lemma \ref{lemma-orbifold-invariant-rubber}.

The proof of Theorem \ref{theorem-local-model} is completed.

\section{Genus Zero Relative and Orbifold Invariants}\label{section-genus-zero}

It is proved in \cite{ACW} that relative invariants of $(X,D)$ and orbifold invariants of $X_{D,r}$ are equal in genus zero, provided that $r$ is sufficiently large. The proof in \cite{ACW} is through comparison between virtual fundamental classes on different moduli spaces. In this section we give a new proof for the exact equality between genus zero relative invariants of $(X,D)$ and genus zero orbifold invariants of the root stack $X_{D,r}$ for sufficiently large $r$. Our new proof is through degeneration formula and virtual localization. The reason why the equality fails to hold for higher genus invariants can be seen directly from the localization computation.

We consider the following genus zero relative and orbifold invariants.
\begin{align}\label{relative-invariant-genus-zero}
&\left\langle \prod_{i=1}^m\tau_{a_i}(\delta_i)\left|\prod_{i=1}^n \tau_{a_{m+i}}(\gamma_{m+i})\right.\right\rangle^{(X,D)}_{0,\vec k,n,d}:=\\
\notag &\int_{[\overline{M}_{0,\vec k,n,d}(X,D)]^{vir}}\psi_1^{a_1}\on{ev}^*_{1}(\delta_1)\cdots \psi_m^{a_m}\on{ev}^*_{m}(\delta_m)\cdot\psi_{m+1}^{a_{m+1}}\on{ev}^{*}_{m+1}(\gamma_{m+1})\cdots\psi_{m+n}^{a_{m+n}}\on{ev}^{*}_{m+n}(\gamma_{m+n}),
\end{align}
and
\begin{align}\label{orbifold-invariant-genus-zero}
&\left\langle \prod_{i=1}^m\tau_{a_{i}}(\delta_i)\prod_{i=1}^n \tau_{a_{m+i}}(\gamma_{m+i})\right\rangle^{X_{D,r}}_{0,\vec k,n,d}:=\\
\notag &\int_{[\overline{M}_{0,\vec k,n,d}(X_{D,r})]^{vir}}\bar{\psi}_1^{a_1}\on{ev}^*_{1}(\delta_1)\cdots \bar{\psi}_m^{a_m}\on{ev}^*_{m}(\delta_m)\cdot \bar{\psi}_{m+1}^{a_{m+1}}\on{ev}^{*}_{m+1}(\gamma_{m+1})\cdots\bar{\psi}_{m+n}^{a_{m+n}}\on{ev}^{*}_{m+n}(\gamma_{m+n}).
\end{align}

\begin{theorem}[\cite{ACW}, Theorem 1.2.1]\label{thm:genus-zero}
For $r$ sufficiently large, genus zero relative and orbifold invariants coincide:
\[
(\ref{relative-invariant-genus-zero})=(\ref{orbifold-invariant-genus-zero}).
\]
\end{theorem}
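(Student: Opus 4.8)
The plan is to follow the same degeneration-plus-localization scheme that proves Theorem \ref{theorem-local-model}, but to extract sharper information from the localization once $g=0$. First I would reduce the equality (\ref{relative-invariant-genus-zero})$=$(\ref{orbifold-invariant-genus-zero}) to the local models. Degenerating $X$ (resp. $X_{D,r}$) to the normal cone of $D$ (resp. $\mathcal{D}_r$) produces, exactly as in Section \ref{section-degeneration}, two degeneration formulae of identical shape, so it suffices to compare the genus-zero relative-orbifold invariants of $(Y_{D_0,r},D_\infty)$ with the relative invariants of $(Y,D_0\cup D_\infty)$. A genus-zero-specific point to dispatch here is that the invariants (\ref{relative-invariant-genus-zero}) and (\ref{orbifold-invariant-genus-zero}) carry descendant classes $\psi_i^{a_i}$, resp. $\bar\psi_i^{a_i}$, at the relative/orbifold marked points $1\le i\le m$ as well, which must be matched across the two theories; these are not present in the statement of Theorem \ref{theorem-local-model}, so they will have to be tracked by hand inside the localization.

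Granting the reduction, the claim becomes: in genus zero the local relative-orbifold invariant of $(Y_{D_0,r},D_\infty)$ is independent of $r$ and equals the corresponding local relative invariant of $(Y,D_0\cup D_\infty)$. I would prove this by re-running the $\mathbb{C}^*$-localization of Section \ref{section-p1-bundle} with $g=0$, but tracking \emph{every} power of $r$ rather than only the $r^0$ term as in Lemma \ref{lemma-orbifold-rubber}. Setting $g=0$ forces each fixed-locus graph $\Gamma$ to be a tree with all vertices of genus zero. For a genus-zero vertex $v$ over the zero section, the virtual rank of $-R^*\pi_*\mathcal{L}$ equals $|E(v)|-1$, so the vertex factor (\ref{contribution-0-pushforward}) involves only the pushed-forward classes $\hat c_i$ with $1\le i\le |E(v)|-1$, each weighted by $s^{g(v)-i}=s^{-i}$ with $s=tr$. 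By the polynomiality of \cite{JPPZ1} each $\hat c_i$ is a polynomial in $r$, and the contribution is organised as before into the zero-section factors (\ref{contribution-0-pushforward}) and the single infinity factor (\ref{contribution-infty-pushforward}).

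The heart of the argument is then a uniform power count after pushforward to $\overline{M}_{0,m+n+l(\mu),\pi_*d}(D)$. I would expand the geometric series $1/(1-\tfrac{rd_e}{s}\bar\psi_{(e,v)})$, retain only the terms whose $\bar\psi$- and $\psi_\infty$-degree survive the pushforward for dimension reasons, and show that the resulting net exponent of $r$ is never positive, with equality forced precisely on the single-vertex-over-infinity graph that already computes the relative invariant. The genus-zero hypothesis is essential here: since $g(v)=0$, the Chern-class range terminates at $|E(v)|-1$ and the accompanying monomials $s^{-i}$ are \emph{strictly} negative, whereas in positive genus the indices $i>g(v)$ generate the genuinely positive powers of $r$ responsible for the degradation to a mere $r^0$-coefficient statement. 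This is the promised place where the distinction between genus zero and higher genus is visible directly in the localization.

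The main obstacle I anticipate is making this power count global rather than vertex-by-vertex: an individual vertex factor can carry a positive power of $r$ coming from the $r$-degree of the polynomials $\hat c_i$, and one must show these are always outweighed by the strictly negative $s^{-i}$ together with the edge and node denominators \emph{once the single dimension constraint on the total pushforward is imposed across all vertices simultaneously}. Pinning down the exact top power of $r$ permitted for $\hat c_i$ in genus zero, and checking that the cohomology weights $\delta_{s_i}$, the orbifold-point descendants $\bar\psi_i^{a_i}$ (including any age-dependent normalization relating them to $\psi_i^{a_i}$ on the relative side), and the target class $\psi_\infty$ cannot restore a positive power, is the crux of the computation.
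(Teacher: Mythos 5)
Your reduction step is sound and matches the paper: degenerate to the normal cone, observe the two degeneration formulae have identical shape, and reduce to comparing $(Y_{D_0,r},D_\infty)$ with $(Y,D_0\cup D_\infty)$, with one insertion $\tau_a([D_\infty]\cdot\delta)$ and the rest pulled back from $D$. You have also correctly spotted the genus-zero mechanism: with $g(v)=0$ the Chern sum at a zero-section vertex terminates at $c_{|E(v)|-1}$ and, after factoring, reads $\frac rt\prod_{e\in E(v)}\frac{d_e}{1-d_e\bar\psi_{(e,v)}/t}\sum_i (t/r)^{-i}c_i(-R^*\pi_*\mathcal L)$. But your plan then takes a wrong turn: you push forward to $\overline{M}_{0,m+n+l(\mu),\pi_*d}(D)$, substitute $s=tr$, invoke the polynomiality of $\hat c_i$ from \cite{JPPZ1}, and stake the proof on showing that ``the net exponent of $r$ is never positive.'' That crux step is both unproven in your sketch and not provable with the tools you cite: \cite{JPPZ1} gives polynomiality of $\hat c_i$ in $r$ but no bound on its degree, and you yourself flag ``the exact top power of $r$ permitted for $\hat c_i$'' as open. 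A dimension constraint on the pushforward fixes cohomological degrees, not $r$-degrees, so it cannot close this gap.

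The point you are missing is that in genus zero no $r$-bookkeeping is needed at all, because the count closes in the equivariant parameter $t$ \emph{before} any pushforward. Every stable-vertex factor carries strictly negative powers of $t$: the zero-section factor above contributes $t^{-1-i}$ (with the $r$-dependence of $c_i(-R^*\pi_*\mathcal L)$ riding along harmlessly), and the infinity factor is $-\frac1t\prod_{e\in E(v)}d_e\bigl(1+\psi_\infty/t\bigr)^{-1}$. Since the inserted class $-\on{ev}^*_p(c_1(L))-t$ supplies at most one positive power of $t$, the $t^0$-coefficient --- which is the whole non-equivariant answer --- receives contributions only from graphs with exactly one stable vertex; as $p$ maps to $D_\infty$ that vertex lies over $\infty$, the $\psi_\infty$ terms drop for the same reason, and one obtains the cycle identity
\begin{align*}
\on{ev}_p^*([D_\infty])\cap[\overline{M}_{0,k,n,\mu,d}(Y_{D_0,r}, D_\infty)]^{\on{vir}}\cong [\overline{M}_{0,k,n,\mu,d}(Y,D_0\cup D_\infty)^\sim]^{\on{vir}}
\end{align*}
exactly, for each sufficiently large $r$ --- not merely after extracting an $r$-coefficient. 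This is precisely where genus zero differs from higher genus: for $g(v)>0$ the exponents $(t/r)^{g(v)-1+|E(v)|-i}$ include nonnegative powers of $t$, the $t$-count fails, and only then does one need the pushforward, $s=tr$, and \cite{JPPZ1} as in Lemma \ref{lemma-orbifold-rubber}. So redirect your power count from $r$ to $t$ and drop the pushforward; as written, your central estimate is a genuine gap. (Your side remark that the descendants $\bar\psi_i^{a_i}$ at the orbifold markings need tracking is fair --- they sit on unstable vertices over $0$, cf. Remark \ref{remark-unstable-vertex} --- but this is a refinement, not a substitute for the missing count.)
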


Degeneration formulas in Section \ref{section-degeneration} shows that it is sufficient to prove equality between genus zero invariants of $(Y_{D_0,r},D_\infty)$ and genus zero invariants of $(Y, D_0\cup D_\infty)$. Following the same procedure of Section \ref{section-p1-bundle}, we first prove the following identity on cycles classes in genus zero.

\begin{lemma}\label{lemma-genus-zero}
Let $p$ be a non-orbifold and non-relative marked point. For $r$ sufficiently large, we have
\begin{align}\label{identity-genus-zero}
\epsilon^{\on{orb}}_*\left(\on{ev}_p^*([D_\infty])\cap[\overline{M}_{0,\vec k,n,\vec \mu,d}(Y_{D_0,r}, D_\infty)]^{\on{vir}}\right)\cong \epsilon^{\on{rel}}_*\left([\overline{M}_{0,\vec k,n,\vec \mu,d}(Y,D_0\cup D_\infty)^\sim]^{\on{vir}}\right).
\end{align}
\end{lemma}
\begin{proof}
Following the proof of Lemma \ref{lemma-orbifold-rubber}, the localization formula is
\begin{align}\label{localization-genus-zero}
&\on{ev}_p^*([D_\infty])\cap[\overline{M}_{0,\vec k,n,\vec \mu,d}(Y_{D_0,r}, D_\infty)]^{\on{vir}}=\\
\notag & \qquad \sum_{\Gamma}\frac{1}{|\on{Aut}(\Gamma)|\prod_{e\in E(\Gamma)}d_e} \cdot\iota_*\left( \left(-\on{ev}^*_p(c_1(L))-t\right)\cdot\frac{[\overline{M}_{\Gamma}]^{\on{vir}}}{e(\on{Norm}_{\Gamma}^{\on{vir}})}\right).
\end{align}
The inverse of the virtual normal bundle $\frac{1}{e(\on{Norm}_{\Gamma}^{\on{vir}})}$ can be written as the product of the following factors:
\begin{itemize}
\item for each stable vertex $v$ over the zero section, there is a factor
\begin{align}\label{vertex-contri-genus-zero}
&\prod_{e\in E(v)}\frac{rd_e}{t+\on{ev}_{e}^*c_1(L)-d_e\bar{\psi}_{(e,v)}}\left(\sum_{i=0}^{\infty}(t/r)^{-1+|E(v)|-i}c_i(-R^*\pi_*\mathcal L)\right)\\
\notag =&\left(\frac rt\right)^{|E(v)|}\prod_{e\in E(v)}\frac{d_e}{1+\frac{\on{ev}_{e}^*c_1(L)-d_e\bar{\psi}_{(e,v)}}{t}}\left(\sum_{i=0}^{\infty}(t/r)^{-1+|E(v)|-i}c_i(-R^*\pi_*\mathcal L)\right)\\
\notag =&\frac rt \prod_{e\in E(v)}\frac{d_e}{1+\frac{\on{ev}_{e}^*c_1(L)-d_e\bar{\psi}_{(e,v)}}{t}}\left(\sum_{i=0}^{\infty}(t/r)^{-i}c_i(-R^*\pi_*\mathcal L)\right);
\end{align}
\item if the target expands over the $\infty$-section, there is a factor
\begin{align}
\frac{\prod_{e\in E(\Gamma)}d_e}{-t-\psi_\infty}=-\frac{1}{t}\frac{\prod_{e\in E(\Gamma)}d_e}{1+\frac{\psi_\infty}{t}}.
\end{align}
\end{itemize}
Note that the vertex contribution over the zero section is the corresponding vertex contribution in Lemma \ref{lemma-orbifold-rubber} by setting $g(v)=0$ for all $v$. Therefore, we have the factor $(t/r)^{-i}$ in (\ref{vertex-contri-genus-zero}) instead of the factor $(t/r)^{g(v)-i}$.
As a result, each factor contains only negative powers of $t$ and contributes at least one $t^{-1}$. In order to extract $t^0$-coefficient from (\ref{localization-genus-zero}), there can only be one stable vertex in the decorated graph $\Gamma$. Since the non-orbifold and non-relative marked point $p$ has to land on the infinity divisor $D_\infty$, the only stable vertex is over $\infty$. Therefore, the decorated graph $\Gamma$ is of a stable vertex of genus $0$ over $\infty$ and $m$ unstable vertices over $0$. Since every  $\psi_\infty$ class comes with an extra factor of $t^{-1}$, no term with $\psi_\infty$ class appears in the coefficient of $t^0$. What is left is exactly the right hand side of (\ref{identity-genus-zero}).
\end{proof}

\begin{remark}
The proof does not work for higher genus invariants due to the fact that the contributions from stable vertices over the zero section contain nonnegative power of $t$. Therefore, the coefficient of $t^0$ does not get simplified as in genus zero case. Hence, for higher genus invariants, one needs to pushforward to the moduli space of stable maps to $X$ and also take the coefficient of $r^0$, as discussed in Lemma \ref{lemma-orbifold-rubber}. 
\end{remark}

\begin{proof}[Proof of Theorem \ref{thm:genus-zero}]
By degeneration formulas in Section \ref{section-degeneration}, we only need to compare relative and orbifold invariants of relative local models. Lemma \ref{lemma-genus-zero} implies an equality between genus zero invariants when there is exactly one insertion of the form $\tau_a([D_\infty]\cdot \delta)$ and all other insertions are of the form $\tau_a(\delta)$, where the cohomology class $\delta$ is pulled back from $H^*(D,\mathbb Q)$. In other words, in the genus zero case, the orbifold invariant in Proposition \ref{proposition-local-model} is constant in $r$. Then, Theorem \ref{thm:genus-zero} follows from Lemma \ref{lemma-prop-imply-thm}. In other words, we can follow the same analysis in Section \ref{sec:proof-thm} to prove the general case follows from Proposition \ref{proposition-local-model}. More explicitly, we consider the three types invariants in Section \ref{sec:proof-thm} and restrict the discussion to genus zero invariants. By running through the argument in Section \ref{sec:proof-thm} for genus zero invariants, we see that all these three types of genus zero orbifold invariants are constant in $r$ because the orbifold invariant in Proposition \ref{proposition-local-model} is constant in $r$ when $g=0$.
The proof of Theorem \ref{thm:genus-zero} is completed.  
\end{proof}

\section{Stationary Gromov-Witten Theory of Curves.}\label{section-stationary}

In this section, we prove  Theorem \ref{theorem-stationary-local} for the equality between stationary Gromov-Witten invariants of $(\mathbb P^1[r],\infty)$ and stationary Gromov-Witten invariants of $(\mathbb P^1,0,\infty)$. The proof is based on the degeneration formula in the proof of Theorem \ref{theorem-local-model} and the equality for genus zero invariants. 

\subsection{The Proof of Theorem \ref{theorem-stationary-local}.}\label{section-proof-stationary-1}

By Lemma \ref{lemma-type-I-imply-type-II}, the proof of Theorem \ref{theorem-stationary-local} is reduced to the case of orbifold-relative stationary invariants of $(\mathbb P^1[r],\infty)$ with no stationary marked points, that is,
\begin{align}\label{inv-no-mark}
\langle |\mathbf \mu\rangle^{(\mathbb P^1[r],\infty)}_{g,\vec k,0,\vec \mu,d}. 
\end{align}
This is where we need the invariants to be stationary. More specifically, we consider the degeneration formula (\ref{degeneration-type-II-orbifold-invariant}) in the proof of Lemma \ref{lemma-type-I-imply-type-II} such that all stationary marked points are distributed to the component containing $\infty$. 
There are no insertions in the invariant (\ref{inv-no-mark}), therefore the virtual dimension $\overline{M}_{g,\vec k,0,\vec \mu,d}(\mathbb P^1[r],\infty)$ has to be zero. That is,
 \[
2g-2+m+l(\mathbf \mu)=0.
\] 
This means $g=0$, $m=1$ and $l(\mathbf \mu)=1$. This is genus $0$ invariants of $(\mathbb P^1[r],\infty)$ when there is only one relative marked point, one orbifold marked point and, no non-relative and non-orbifold marked points. 

Similarly for relative invariants of $(\mathbb P^1,0,\infty)$. We only need to consider genus zero invariants of $(\mathbb P^1,0,\infty)$ with single relative marked point at $0$ and $\infty$ respectively; and no non-relative marked points.

Hence, it is sufficient to prove the following equality
\[
\langle \,|(d)\rangle^{(\mathbb P^1[r],\infty)}_{0,(d),0,(d),d}=\langle (d)|\,|(d)\rangle^{(\mathbb P^1,0,\infty)}_{0,(d),0,(d),d},
\]
where $(d)$ represents the trivial partition of $d$ with only one part. It is simply a special case of the equality for genus zero invariants. This completes the proof of Theorem \ref{theorem-stationary-local}.

\begin{remark}
Theorem \ref{theorem-stationary-local} can also be proved by localization comparison which is similar to the proof of Theorem \ref{theorem-local-model}. The key point for the proof of Theorem \ref{theorem-stationary-local} using localization technique is that one can match the vertex contributions using (the disconnected version of) the formulas for double Hurwitz numbers in \cite[Proposition 5.4]{LLZ} and \cite[Theorem 1]{JPT}. Alternatively, one can simply use the formula for double Hurwitz numbers in \cite[Theorem 1]{JPT} to prove that the stationary orbifold invariants are constants in $r$. Hence, by Theorem \ref{main-theorem}, they have to be the same as stationary relative invariants.
\end{remark}

\subsection{Application: Stationary Orbifold Invariants as Hurwitz Numbers}\label{section-stationary-orbifold-hurwitz}
In the celebrated paper \cite{OP06a} by Okounkov-Pandharipande, stationary relative Gromov-Witten invariants of target curves are proven to be equal to Hurwitz numbers with completed cycles, that is, the sum of the Hurwitz numbers obtained by replacing $\tau_{a}(\omega)$ by the associated ramification conditions. The ramification conditions associated to $\tau_{a}(\omega)$ are universal, independent of all factors including the target curve. This is known as GW/H correspondence for relative theory of target curves. Theorem (\ref{theorem-stationary}) states an equality between stationary relative invariants and stationary orbifold invariants of r-th root stacks of target curves. Therefore stationary orbifold Gromov-Witten invariants of r-th root stacks of target curves are equal to Hurwitz numbers with completed cycles when $r$ is sufficiently large. 

We briefly review the theory in \cite{OP06a}. The Hurwitz theory of a smooth curve $C$ describes the enumeration of covers of $C$ with prescribed ramification data given by the cover over the branch points. 

Let $d>0$, and let $\vec \eta^1,\ldots,\vec \eta^l$ be partitions of $d$ assigned to $l$ distinct points $q_1,\ldots, q_l$ of $C$. A Hurwitz cover of $C$ of genus $g$ with ramifications profiles $\vec \eta^1,\ldots, \vec \eta^l$ over $q_1,\ldots,q_l$ is a morphism
\[
\pi:C^\prime\rightarrow C
\]
satisfying the following properties:
\begin{itemize}
\item $C^\prime$ is a nonsingular, connected, genus $g$ curve;
\item the divisors $\pi^{-1}(q_i)$ has ramification profiles equal to the partition $\vec \eta^i,$ for $1\leq i \leq l$;
\item the map $\pi$ is unramified over $C\setminus \{q_1,\ldots,q_l\}$.
\end{itemize}

The Hurwitz number,
\[
H^C_d(\vec \eta^1,\ldots,\vec \eta^l)
\]
is defined to be the weighted count of the distinct Hurwitz covers $\pi$ of genus $g$ with ramifications profiles given by $\vec \eta^1,\ldots,\vec \eta^l$ over $q_1,\ldots,q_l$. Each such cover is weighted by $1/\Aut(\pi)$.

Hurwitz numbers $H^{C}_d(\vec \eta^1,\ldots,\vec \eta^l)$ can be extended to all degree $d$ and all partitions $\vec \eta^i$. Let 
\[
\vec \eta^i=(\eta^i_1,\ldots,\eta^i_{l(i)}),
\] 
and $|\eta^i|=\sum_j^{l(i)} \eta^i_j$, where $l(i)$ is the length of $\vec \eta^i$.
Hurwitz numbers $H^{C}_d(\vec \eta^1,\ldots,\vec \eta^l)$ are defined as follows:
\begin{itemize}
\item $H^{C}_0(\emptyset,\ldots,\emptyset)=1$, where $\emptyset$ stands for empty partition.
\item If $|\vec \eta^i|>d$ for some $i$, then the Hurwitz number vanishes.
\item If $|\vec \eta^i|\leq d$ for all $1\leq i\leq l$, then the Hurwitz number is defined as
\begin{align}
H^{C}_d(\vec \eta^1,\ldots,\vec \eta^l)=\prod_{i=1}^l{m_1(\vec \eta_+^i)\choose m_1(\vec \eta^i)}\cdot H^{C}_d(\vec \eta^1_+,\ldots,\vec \eta^l_+),
\end{align}
where $\vec \eta_+^i$ is the partition of $d$ determined by adjoining $d-|\vec \eta^i|$ parts of size $1$:
\[
\vec \eta_+^i=(\eta^i_1,\ldots,\eta^i_{l(i)},1,\ldots,1);
\]
$m_1(\vec \eta)$ is the multiplicity of the $1$ in $\vec \eta$.
\end{itemize}

Let $S(d)$ be the symmetric group. The class algebra $\mathcal Z(d)\subset \mathbb Q S(d)$ is the center of the group algebra $\mathbb Q S(d)$. Let $C_{\vec \eta}\in \mathcal Z(d)$ be the conjugacy class corresponding to the partition $\vec \eta$. Let $\lambda$ be an irreducible representation of $S(d)$. The conjugacy class $C_{\vec \eta}$ acts as a scalar operator on $\lambda$ with eigenvalue
\[
f_{\vec \eta}(\lambda)={d \choose |\vec \eta|} |C_{\vec \eta}|\frac{\chi_\eta^\lambda}{\dim \lambda},
\]
where $\chi_{\vec \eta}^\lambda$ is the character of any element of $C_{\vec \eta}$ in the representation $\lambda$ and $\dim \lambda$ is the dimension of the representation $\lambda$.

Let $\mathcal P$ be the set of all partitions. There is a linear, injective Fourier transform
\begin{align*}
\phi: \bigoplus_{d=0}^\infty \mathcal Z(d)&\rightarrow \mathbb Q^{\mathcal P}\\
C_{\vec \eta}&\mapsto f_{\vec \eta}.
\end{align*}
The image of $\phi$ is the set of so-called shifted symmetric functions $\Lambda^*$. An element $f$ of the algebra of shifted symmetric functions $\Lambda^*$ can be concretely given as a sequence of polynomials
\[
f=\{f^{(n)}\}, \quad f^{(n)}\in \mathbb Q[\lambda_1,\ldots,\lambda_n]^{*S(n)},
\]
where $\mathbb Q[\lambda_1,\ldots,\lambda_n]^{*S(n)}$ is the invariants of the shifted action of the symmetric group $S(n)$ on the algebra $\mathbb Q[\lambda_1,\ldots,\lambda_n]$. The shifted action is defined by permutation of the variables $\lambda_i$. The sequence $\{f^{(n)}\}$ satisfies
\begin{itemize}
\item $f^{(n)}$ are of uniformly bounded degree,
\item $f^{(n)}$ are stable under restriction, that is,
$f^{(n+1)}|_{\lambda_{n+1}=0}=f^{(n)}$.
\end{itemize}
The shifted symmetric power sum $p_k\in \Lambda^*$ is defined by
\[
p_k(\lambda)=\sum_{i=1}^\infty \left[(\lambda_i-i+\frac 12)^k-(-i+\frac 12)^k\right]+(1-2^{-k})\zeta(-k).
\]

For each partition $\vec \eta$, define $p_{\vec \eta}\in \Lambda^*$ as
\[
p_{\vec \eta}=\prod p_{\eta_i}.
\] 
The completed conjugacy classes are defined by
\[
\overline{C}_{\vec \eta}=\frac{1}{\prod_i \eta_i}\phi^{-1}(p_{\vec \eta})\in \bigoplus_{d=0}^{|\eta|}\mathcal Z(d).
\]
The completed cycles are defined by
\[
\overline{(a)}=\overline{C}_{(a)}, \quad a=1,2,\ldots.
\]
More concretely, completed cycle $\overline{(a)}$ is obtained from the cycle $(a)$ by adding  multiples of constant terms and nonnegative multiples of nontrivial conjugacy classes of strictly smaller size. More details can be found in \cite[Section 0.4]{OP06a}. 

The following GW/H correspondence is proved in \cite{OP06a}:
\begin{theorem}(\cite{OP06a}, Theorem 1)\label{theorem-GW/H-correspondence}
Let $C$ be a smooth target curve of any genus. The GW/H correspondence for the relative Gromov-Witten theory of $C$ is
\begin{align}
\langle \prod_{i=1}^n\tau_{a_i}(\omega)| \eta^1|\ldots|\eta^l\rangle_{g,n,\vec \eta^1,\ldots,\vec \eta^l,d}^{\bullet, (C,q_1,\ldots,q_l)}=\frac{1}{\prod (a_i!)}H_d^{C}(\overline{(a_1+1)},\ldots, \overline{(a_n+1)},\vec \eta^1,\ldots,\vec \eta^l),
\end{align}
where $\omega \in H^2(C,\mathbb Q)$ is the Poincar\'e dual of the point class.
\end{theorem}

Theorem \ref{theorem-stationary} and Theorem \ref{theorem-GW/H-correspondence} together imply the following GW/H correspondence for orbifolds.

\begin{corollary}\label{GW/H-orbifold}
Let $C$ be a smooth target curve in any genus. Let $C[r_1,\ldots,r_l]$ be the root stack over $C$ by taking $r_i$-th root at  the point $q_i\in C$, for the $l$ distinct points $q_1,\ldots,q_l$ of $C$. When $r_i$ are sufficiently large for all $1\leq i\leq l$, we have the following GW/H correspondence:
\begin{align}
\langle \prod_{i=1}^n\tau_{a_i}(\omega)\rangle_{g,n,\vec \eta^1,\ldots,\vec \eta^l,d}^{\bullet, C[r_1,\ldots,r_l]}=\frac{1}{\prod a_i!}H_d^{C}(\overline{(a_1+1)},\ldots, \overline{(a_n+1)},\vec \eta^1,\ldots,\vec \eta^l),
\end{align}
where $\omega \in H^2(C,\mathbb Q)$ is the Poincar\'e dual of the point class.
\end{corollary}


\begin{thebibliography}{30}	
 
\bibitem{Abramovich} D. Abramovich,
\emph{Lectures on Gromov-Witten invariants of orbifolds.}
in: ``Enumerative invariants in algebraic geometry and string theory'', 1--48, 
Lecture Notes in Math., 1947, Springer, Berlin, 2008. 

\bibitem{ACW} D. Abramovich, C. Cadman, J. Wise,
\emph{Relative and orbifold Gromov-Witten invariants.}
Algebr. Geom. 4 (2017), no. 4, 472--500. 

\bibitem{AF} D. Abramovich, B. Fantechi, 
\emph{Orbifold techniques in degeneration formulas.} Ann. Sc. Norm. Super. Pisa Cl. Sci. (5) 16 (2016), no. 2, 519--579.

\bibitem{AGV02} D. Abramovich, T. Graber, A. Vistoli,
\emph{Algebraic orbifold quantum products.}
in: ``Orbifolds in mathematics and physics (Madison, WI, 2001)'', 1--24, 
Contemp. Math., 310, Amer. Math. Soc., Providence, RI, 2002. 

\bibitem{AGV} D. Abramovich, T. Graber, A. Vistoli,
\emph{Gromov-Witten theory of Deligne-Mumford stacks.}
Amer. J. Math. 130 (2008), no. 5, 1337--1398. 

\bibitem{BOSS} P. Dunin-Barkowski, N. Orantin, S. Shadrin, L. Spitz,
\emph{Identification of the Givental formula with the spectral curve topological recursion procedure.}
Comm. Math. Phys. 328 (2014), no. 2, 669--700. 

\bibitem{Cadman} C. Cadman,
\emph{Using stacks to impose tangency conditions on curves.}
Amer. J. Math. 129 (2007), no. 2, 405--427. 

\bibitem{CC} C. Cadman, L. Chen,
\emph{Enumeration of rational plane curves tangent to a smooth cubic.}
Adv. Math. 219 (2008), no. 1, 316--343. 

\bibitem{CDW} B. Chen, C.-Y. Du, R. Wang,
\emph{Orbifold Gromov-Witten theory of weighted blowups.}
https://www.math.uci.edu/~ruiw10/pdf/weightedGW.pdf.

\bibitem{CR} W. Chen, Y. Ruan,
\emph{Orbifold Gromov-Witten theory.}
in: ``Orbifolds in mathematics and physics (Madison, WI, 2001)'', 25--85, 
Contemp. Math., 310, Amer. Math. Soc., Providence, RI, 2002. 

\bibitem{FWY18} H. Fan, L. Wu, F. You,
\emph{Structures in genus-zero relative Gromov--Witten theory.}
J. Topol. 13 (2020), no. 1, 269--307. 

\bibitem{FWY19} H. Fan, L. Wu, F. You,
\emph{ Higher genus relative Gromov--Witten theory and DR-cycles.}
arXiv:1907.07133.

\bibitem{GS} A. Geraschenko, M. Satriano,
\emph{A "bottom up'' characterization of smooth Deligne-Mumford stacks.}
Int. Math. Res. Not. IMRN 2017, no. 21, 6469--6483. 

\bibitem{GP} T. Graber, R. Pandharipande,
\emph{Localization of virtual classes.}
Invent. Math. 135 (1999), no. 2, 487--518.

\bibitem{GV} T. Graber, R. Vakil,
\emph{Relative virtual localization and vanishing of tautological classes on moduli spaces of curves.} 
 Duke Math. J. 130.1 (2005): 1--37.

\bibitem{IP} E. Ionel, T. Parker,
\emph{Relative Gromov-Witten invariants.}
Ann. of Math. (2) 157 (2003), no. 1, 45--96. 

\bibitem{JPPZ} F. Janda, R. Pandharipande, A. Pixton, D. Zvonkine,
\emph{Double ramification cycles on the moduli spaces of curves.}
Publ. Math. Inst. Hautes \'Etudes Sci. 125 (2017), 221--266.

\bibitem{JPPZ18} F. Janda, R. Pandharipande, A. Pixton, D. Zvonkine,
\emph{Double ramification cycles with target varieties.}
arXiv:1812.10136.

\bibitem{Johnson} P. Johnson,
\emph{Equivariant Gromov-Witten theory of one dimensional stacks.}
Ph.D. Thesis, University of Michigan, 2009.

\bibitem{JPT} P. Johnson, R. Pandharipande, H.-H. Tseng,
\emph{Abelian Hurwitz-Hodge integrals.}
Michigan Math. J. 60 (2011), no. 1, 171--198. 

\bibitem{LR} A.-M. Li and Y. Ruan,
\emph{Symplectic surgery and Gromov-Witten invariants of Calabi-Yau 3-folds.}
Invent. Math. 145 (2001), no. 1, 151--218. 

\bibitem{Li} J. Li,
\emph{A degeneration formula of GW-invariants.}
J. Differential Geom. 60 (2002), no. 2, 199--293. 

\bibitem{Liu}C.-C. M. Liu,
\emph{Localization in Gromov-Witten theory and orbifold Gromov-Witten theory.}
In: "Handbook of Moduli", Volume II, 353-425, Adv. Lect. Math., (ALM) 25, International Press and Higher Education Press, 2013.

\bibitem{LLZ} C.-C. M. Liu, K. Liu, J. Zhou,
\emph{A formula of two-partition Hodge integrals.}
J. Amer. Math. Soc. 20 (2007), no. 1, 149--184. 

\bibitem{MP} D. Maulik, R. Pandharipande,
\emph{A topological view of Gromov-Witten theory.}
Topology 45 (2006), no. 5, 887--918. 

\bibitem{MR} S. Molcho, E. Routis,
\emph{Localization for logarithmic stable maps.}
Trans. Amer. Math. Soc. Ser. B 6 (2019), 80--113.

\bibitem{NS} P. Norbury, N. Scott,
\emph{Gromov-Witten invariants of $\mathbb P^1$ and Eynard-Orantin invariants.}
Geom. Topol. 18 (2014), no. 4, 1865--1910. 

\bibitem{OP09} A. Okounkov, R. Pandharipande,
\emph{Gromov-Witten theory, Hurwitz numbers, and matrix models.}
in: ``Algebraic geometry--Seattle 2005''. Part 1, 325--414, 
Proc. Sympos. Pure Math., 80, Part 1, Amer. Math. Soc., Providence, RI, 2009. 

\bibitem{OP06a} A. Okounkov, R. Pandharipande,
\emph{Gromov-Witten theory, Hurwitz theory, and completed cycles.}
Ann. of Math. (2) 163 (2006), no. 2, 517--560. 

\bibitem{OP06b} A. Okounkov, R. Pandharipande,
\emph{The equivariant Gromov-Witten theory of  $\mathbb P^1$.}
Ann. of Math. (2) 163 (2006), no. 2, 561--605. 

\bibitem{OP} A. Okounkov, R. Pandharipande,
\emph{Virasoro constraints for target curves.}
Invent. Math. 163 (2006), no. 1, 47--108. 

\bibitem{Tseng} H.-H. Tseng,
\emph{Orbifold quantum Riemann-Roch, Lefschetz and Serre.}
Geom. Topol. 14 (2010), no. 1, 1--81. 

\bibitem{TY16} H.-H. Tseng, F. You,
\emph{On orbifold Gromov-Witten theory in codimension one.}
 J. Pure Appl. Algebra 220 (2016), pp. 3567--3571.

\bibitem{TY16a} H.-H. Tseng, F. You,
\emph{Double ramification cycles on the moduli spaces of admissible covers.}
arXiv:1606.03770.


\bibitem{Vakil} R. Vakil, 
\emph{The moduli space of curves and Gromov--Witten theory.}
 in: ``Enumerative invariants in algebraic geometry and string theory'', 143--198, Lecture Notes in Math., 1947, Springer, Berlin, 2008.

\end{thebibliography}
\end{document}